\begin{document}

\title{Eigenvalue Bounds for Random Matrices via Zerofreeness}
\author{
    Sidhanth Mohanty\thanks{Northwestern University. Email: \texttt{sidhanth.mohanty@northwestern.edu}.
    Much of this work was done while the author was a postdoctoral researcher at MIT supported by NSF Award DMS-2022448.} \and
    Amit Rajaraman\thanks{MIT. Email: \texttt{amit\_r@mit.edu}. Supported by a MathWorks Fellowship. }
}
\date{\today}
\maketitle

\begin{abstract}
    We introduce a new technique to prove bounds for the spectral radius of a random matrix, based on using Jensen's formula to establish the zerofreeness of the associated characteristic polynomial in a region of the complex plane. Our techniques are entirely non-asymptotic, and we instantiate it in three settings:
    \begin{enumerate}[label=(\roman*)]
        \item The spectral radius of \emph{non-asymptotic Girko matrices}---these are asymmetric matrices $\bM \in \C^{n \times n}$ whose entries are independent and satisfy $\E \bM_{ij} = 0$ and $\E |\bM_{ij}^2| \le \frac{1}{n}$.
        \item The spectral radius of \emph{non-asymptotic Wigner matrices}---these are symmetric matrices $\bM \in \C^{n \times n}$ whose entries above the diagonal are independent and satisfy $\E \bM_{ij} = 0$, $\E |\bM_{ij}^2| \le \frac{1}{n}$, and $\E |\bM_{ij}^4| \le \frac{1}{n}$.
        \item The second eigenvalue of the adjacency matrix of a \emph{random $d$-regular graph} on $n$ vertices, as drawn from the configuration model.
    \end{enumerate}
    In all three settings, we obtain constant-probability eigenvalue bounds that are tight up to a constant. Applied to specific random matrix ensembles, we recover classic bounds for Wigner matrices, as well as results of Bordenave--Chafa\"{i}--Garc\'{i}a-Zelada, Bordenave--Lelarge--Massouli\'{e}, and Friedman, up to constants.
\end{abstract}


\section{Introduction}

Given a matrix $M$, we use $\spec(M)$ to denote its spectrum, and $\rho(M)$ to denote its \emph{spectral radius}, defined by
\[
    \rho(M) \coloneqq \max_{\lambda\in\spec(M)} |\lambda|\mper
\]
A central theme in random matrix theory is to understand the spectrum of a random matrix, and notably its \emph{spectral radius}.
Over the decades, numerous tools have been developed to understand the spectra of random matrices: the trace moment method \cite{FK81}, the method of resolvents \cite{ELKYY13}, matrix concentration inequalities \cite{Tro15}, the polynomial method \cite{CGVTvH24}, chaining \cite{Tal14}, and more.

In this work, we introduce a new technique to control the spectral radius of a random matrix, based on Jensen's formula from complex analysis.
Our technique is inspired by the recent work of Bencs, Liu, and Regts \cite{BLR25}, which uses Jensen's formula to study the zeros of partition functions of spin glasses.

Concretely, as we will see in \Cref{sec:complex-prelims}, the following fact is a straightforward consequence of Jensen's formula.
\begin{restatable}{theorem}{jensendjensen} \label{thm:jensend-jensen}
    For any matrix $M$ and $r \in \R_{ > 0}$, we have:
    \[
        \prod_{\lambda\in\spec(M):\, \lambda > \tau} \parens*{\frac{|\lambda|}{\tau}}^2 \le \E_{\btheta\sim[0,2\pi]} \abs{ \det(I - \frac{e^{\im\btheta}}{\tau}\cdot M ) }^2\mper  \numberthis \label{eq:core-inequality}
    \]
\end{restatable}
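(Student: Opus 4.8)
The plan is to regard $z \mapsto \det(I - zM)$ as a polynomial and feed it into Jensen's formula. Write $p(z) \coloneqq \det(I - zM) = \prod_{\lambda \in \spec(M)} (1 - z\lambda)$, with eigenvalues listed according to algebraic multiplicity. Then $p$ is entire (it is a polynomial), $p(0) = \det I = 1 \ne 0$, and the zeros of $p$, with multiplicity, are exactly the points $1/\lambda$ over the \emph{nonzero} eigenvalues $\lambda$ of $M$; such a zero lies strictly inside the disk of radius $1/\tau$ about the origin exactly when $\abs{\lambda} > \tau$ (interpreting the index condition $\lambda > \tau$ in the statement as $\abs{\lambda} > \tau$, the only sensible reading for a complex spectrum).

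First I would apply Jensen's formula (recalled in \Cref{sec:complex-prelims}) to $p$ on the circle of radius $1/\tau$. Since $\log\abs{p(0)} = \log 1 = 0$ and the zero $1/\lambda$ has modulus $1/\abs{\lambda}$, so that $\log\frac{1/\tau}{1/\abs{\lambda}} = \log\frac{\abs{\lambda}}{\tau}$, this gives
\[
    \E_{\btheta\sim[0,2\pi]} \log\abs{p(e^{\im\btheta}/\tau)} = \sum_{\lambda\in\spec(M):\, \abs{\lambda} > \tau} \log\frac{\abs{\lambda}}{\tau}\mper
\]
Doubling both sides (equivalently, applying the same identity to $\abs{p}^2$, using $\log\abs{p}^2 = 2\log\abs{p}$) and folding the sum back into a product,
\[
    \E_{\btheta\sim[0,2\pi]} \log\abs{p(e^{\im\btheta}/\tau)}^2 = \log\parens*{\prod_{\lambda\in\spec(M):\, \abs{\lambda} > \tau} \parens*{\frac{\abs{\lambda}}{\tau}}^2}\mper
\]

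Next I would invoke Jensen's inequality in its probabilistic form $\exp(\E X) \le \E \exp(X)$ (convexity of $\exp$), applied to $X = \log\abs{p(e^{\im\btheta}/\tau)}^2$ with $\btheta$ uniform on $[0,2\pi]$. Exponentiating the previous display and bounding yields
\[
    \prod_{\lambda\in\spec(M):\, \abs{\lambda} > \tau} \parens*{\frac{\abs{\lambda}}{\tau}}^2 = \exp\parens*{\E_{\btheta\sim[0,2\pi]}\log\abs{p(e^{\im\btheta}/\tau)}^2} \le \E_{\btheta\sim[0,2\pi]}\abs{p(e^{\im\btheta}/\tau)}^2 = \E_{\btheta\sim[0,2\pi]}\abs{\det(I - \tfrac{e^{\im\btheta}}{\tau}M)}^2\mper
\]
This is exactly \Cref{eq:core-inequality}. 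The argument is uniform in degenerate cases: when $M$ has no eigenvalue of modulus exceeding $\tau$ the product is empty, both sides of the first Jensen identity vanish, and the inequality reduces to the trivially true $\E_{\btheta}\abs{p(e^{\im\btheta}/\tau)}^2 \ge 1$.

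The one technical point to handle with care is the case where $M$ has an eigenvalue of modulus exactly $\tau$, so that $p$ has a zero on the integration contour $\{\abs{z} = 1/\tau\}$. There $\log\abs{p}$ has only an integrable logarithmic singularity, so Jensen's formula still applies verbatim (or one may run the argument on circles of radius $r < 1/\tau$ and let $r \uparrow 1/\tau$, using dominated convergence for the boundary integral together with right-continuity of the product over $\{\abs{\lambda} > r\}$); in either case such eigenvalues contribute nothing to the product, matching the strict inequality in its index set. Beyond this, every step is a direct invocation of a standard tool, so I anticipate no genuine obstacle.
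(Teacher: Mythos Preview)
Your proof is correct and follows essentially the same approach as the paper: apply Jensen's formula to $f(z)=\det(I-zM)$ on the circle of radius $1/\tau$, then upgrade the resulting identity to the desired inequality via Jensen's inequality (convexity of $\exp$). The only cosmetic difference is that the paper first records the general statement for an arbitrary holomorphic $f$ (\Cref{cor:jensen-jensen-formula}) and then specializes, whereas you specialize from the outset; your extra remarks on boundary zeros are a welcome bit of care the paper leaves implicit.
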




This suggests a natural approach for controlling the spectral radius, and the number of outliers of a random matrix $\bM$ drawn from a ``nice'' ensemble $\calD$: explicitly compute and bound the value of $\E_{\bM\sim\calD}\abs{ \det(I - z\bM) }^2$. Concretely, in the setting of a random matrix, we have the following corollaries of \Cref{thm:jensend-jensen}.

\begin{corollary}
    \label{cor:specrad-bound}
    $\displaystyle\E_{\bM \sim \calD} \rho(\bM)^2 \le \tau^2 \cdot \E_{\btheta \sim [0,2\pi]} \E_{\bM \sim \calD} \left| \det\left( \Id - \frac{e^{\im\btheta}}{\tau} \cdot \bM \right) \right|^2\mper$
\end{corollary}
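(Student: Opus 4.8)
The plan is to prove the inequality \emph{pointwise} in the matrix $M$ and then take expectations. Fix a matrix $M\in\C^{n\times n}$ and $\tau\in\R_{>0}$, and split into two cases according to the size of $\rho(M)$. If $\rho(M)>\tau$, pick an eigenvalue $\lambda^\star\in\spec(M)$ with $|\lambda^\star|=\rho(M)$; then the product on the left-hand side of \Cref{thm:jensend-jensen} contains the factor $\parens*{|\lambda^\star|/\tau}^2=\parens*{\rho(M)/\tau}^2$ and every remaining factor is at least $1$, so the product is at least $\parens*{\rho(M)/\tau}^2$. If instead $\rho(M)\le\tau$, the product is empty and hence equal to $1\ge\parens*{\rho(M)/\tau}^2$. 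In either case,
\[
  \parens*{\frac{\rho(M)}{\tau}}^{2}\;\le\;\prod_{\lambda\in\spec(M):\,|\lambda|>\tau}\parens*{\frac{|\lambda|}{\tau}}^{2}\;\le\;\E_{\btheta\sim[0,2\pi]}\abs{\det\parens*{\Id-\frac{e^{\im\btheta}}{\tau}\cdot M}}^{2},
\]
where the second inequality is exactly \Cref{thm:jensend-jensen}. Multiplying through by $\tau^2$ gives $\rho(M)^2\le\tau^2\,\E_{\btheta}\abs{\det(\Id-\tfrac{e^{\im\btheta}}{\tau}M)}^{2}$ for every fixed $M$.

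The remaining step is to substitute $M=\bM$ with $\bM\sim\calD$ and take $\E_{\bM}$ of both sides; linearity of expectation gives $\E_{\bM}\rho(\bM)^2\le\tau^2\,\E_{\bM}\E_{\btheta}\abs{\det(\Id-\tfrac{e^{\im\btheta}}{\tau}\bM)}^{2}$. To match the statement of \Cref{cor:specrad-bound} one then swaps the two expectations on the right-hand side; since the integrand is nonnegative (and jointly measurable in $(\bM,\btheta)$), Tonelli's theorem justifies this interchange, with the convention that the bound is vacuous if the right-hand side is $+\infty$.

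There is essentially no obstacle here: all of the analytic content lives in \Cref{thm:jensend-jensen}, and this corollary only adds the elementary observation that $\prod_{\lambda:\,|\lambda|>\tau}(|\lambda|/\tau)^2$ dominates $(\rho(M)/\tau)^2$, together with linearity of expectation and Tonelli. If anything deserves a sentence of care, it is the case $\rho(M)\le\tau$, where the left-hand product is empty and one must still observe that it upper-bounds $(\rho(M)/\tau)^2$.
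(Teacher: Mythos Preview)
Your proof is correct and matches the paper's intended derivation: the corollary is stated without proof in the paper, as it follows immediately from \Cref{thm:jensend-jensen} via exactly the pointwise bound $(\rho(M)/\tau)^2\le\prod_{|\lambda|>\tau}(|\lambda|/\tau)^2$ that you spell out, followed by taking expectations. There is nothing to add.
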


\begin{corollary}
    \label{cor:num-outliers-bound}
    Given a matrix $M$ and positive numbers $\tau$ and $\delta$, let $k_M$ be the number of ``outlier'' eigenvalues $\lambda \in \spec(M)$ such that $|\lambda| > \tau \sqrt{1+\delta}$. Then,
    \[ \E_{\bM \sim \calD} (1+\delta)^{k_{\bM}} \le \E_{\btheta \sim [0,2\pi]} \E_{\bM \sim \calD} \left| \det \left( \Id - \frac{e^{\im\btheta}}{\tau} \cdot \bM \right) \right|^2 \mper \]
\end{corollary}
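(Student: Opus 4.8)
The plan is to obtain \Cref{cor:num-outliers-bound} directly from \Cref{thm:jensend-jensen} by a short monotonicity argument followed by an exchange of expectations. First I would fix a matrix $M$ and invoke \Cref{thm:jensend-jensen} with threshold $\tau$, which gives
\[
    \prod_{\lambda\in\spec(M):\,|\lambda|>\tau}\parens*{\frac{|\lambda|}{\tau}}^2 \le \E_{\btheta\sim[0,2\pi]}\abs{\det\parens*{\Id - \frac{e^{\im\btheta}}{\tau}\cdot M}}^2\mper
\]
Next I would lower bound the left-hand side. Every eigenvalue appearing in the product has $|\lambda| > \tau$, so each factor $(|\lambda|/\tau)^2$ is at least $1$; hence deleting from the product those factors coming from eigenvalues with $\tau < |\lambda| \le \tau\sqrt{1+\delta}$ can only decrease it. What remains is exactly the product over the $k_M$ outliers with $|\lambda| > \tau\sqrt{1+\delta}$, and each such factor satisfies $(|\lambda|/\tau)^2 > 1+\delta$ by the definition of an outlier. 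Chaining these observations yields
\[
    (1+\delta)^{k_M} \le \prod_{\lambda\in\spec(M):\,|\lambda|>\tau\sqrt{1+\delta}}\parens*{\frac{|\lambda|}{\tau}}^2 \le \prod_{\lambda\in\spec(M):\,|\lambda|>\tau}\parens*{\frac{|\lambda|}{\tau}}^2 \le \E_{\btheta\sim[0,2\pi]}\abs{\det\parens*{\Id - \frac{e^{\im\btheta}}{\tau}\cdot M}}^2\mper
\]

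To finish, I would substitute $M = \bM$, take $\E_{\bM\sim\calD}$ of both sides, and interchange the order of $\E_{\bM}$ and $\E_{\btheta}$ on the right-hand side. This interchange is justified by Tonelli's theorem, since the integrand $\abs{\det(\Id - \tfrac{e^{\im\btheta}}{\tau}\bM)}^2$ is nonnegative and jointly measurable in $(\btheta,\bM)$, so the inequality remains meaningful (and correct) even if both sides are $+\infty$. The result is precisely the claimed bound $\E_{\bM\sim\calD}(1+\delta)^{k_{\bM}} \le \E_{\btheta\sim[0,2\pi]}\E_{\bM\sim\calD}\abs{\det(\Id - \tfrac{e^{\im\btheta}}{\tau}\bM)}^2$.

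Since the whole derivation is a short chain of elementary inequalities built on top of \Cref{thm:jensend-jensen}, there is no genuinely hard step; the only two points deserving care are (i) the direction of the monotonicity step, which works precisely because we discard only factors associated with eigenvalues of modulus strictly larger than $\tau$, so those factors are $\ge 1$, and (ii) the measurability and nonnegativity needed to legitimately swap the two expectations. I would also remark that \Cref{cor:specrad-bound} follows from \Cref{thm:jensend-jensen} by the same template: retain only the single largest factor $(\rho(M)/\tau)^2$ rather than counting outliers, and handle the case $\rho(M)\le\tau$ using $\E_{\btheta}\abs{\det(\Id - \tfrac{e^{\im\btheta}}{\tau}M)}^2 \ge 1$ (the constant coefficient of $\det(\Id - z M)$ equals $1$). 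Thus the two corollaries can be presented in parallel.
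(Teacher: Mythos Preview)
Your proposal is correct and matches the paper's intended derivation: the paper presents \Cref{cor:num-outliers-bound} (and \Cref{cor:specrad-bound}) simply as immediate corollaries of \Cref{thm:jensend-jensen}, and the argument you wrote out---drop factors with $\tau<|\lambda|\le\tau\sqrt{1+\delta}$ since they are $\ge 1$, bound each remaining factor below by $1+\delta$, then take $\E_{\bM}$ and swap with $\E_{\btheta}$ via Tonelli---is exactly the short chain the paper has in mind. There is nothing to add.
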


One may then use Markov's inequality on the above bounds to obtain bounds on the spectral radius (or number of outlier eigenvalues) that hold with constant probability.

As it turns out, in many cases, the combinatorial calculations involved in obtaining a bound on this quantity are simpler than other methods, especially in the sparse regime.

We discuss below the spectral radius bounds we prove in this paper using this method.

\parhead{Random matrices with independent entries.}
Our first result concerns the spectral radius of (asymmetric) matrices with independent entries.
We say that a random matrix $\bM$ is a \emph{non-asymptotic Girko matrix} if it has independent (but not necessarily identically distributed) complex-valued entries: for every $i\ne j\in[n]$, we have $\E \bM_{i,j} = 0$ and $\E \abs*{\bM_{i,j}}^2 = \frac{1}{n}$, and $\bM_{i,i} = 0$ for all $i\in[n]$.

\begin{restatable}{theorem}{girkomain} \label{thm:main-girko}
For an $n\times n$ non-asymptotic Girko matrix $\bM$, we have:
\begin{itemize}
    \item $\E_{\bM}\, \rho(\bM)^2 \le C$ for an absolute constant $C > 1$.
    \item For every $\eps > 0$, $\E_{\bM} \abs{ \{\lambda:\lambda\in\spec(\bM), |\lambda| > 1+\eps\} } \le C'(\eps)$ for an absolute constant $C'(\eps)$ that depends only on $\eps$.
\end{itemize}
\end{restatable}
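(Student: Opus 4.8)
The plan is to feed \Cref{cor:specrad-bound,cor:num-outliers-bound} a single ingredient: an upper bound on $\E_{\bM}\abs{\det(\Id - z\bM)}^2$ that is uniform over all $z \in \C$ with $\abs{z} = 1/\tau$, where $\tau > 1$ is a constant to be optimized at the end. The appeal of the method in this setting is that this is a pure moment computation --- no concentration step, no truncation of heavy tails --- and for matrices with fully independent entries it collapses almost immediately.

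To carry it out, expand the determinant by principal minors and then each minor over permutations,
\[
    \det(\Id - z\bM) = \sum_{S \subseteq [n]} (-z)^{\abs{S}}\det(\bM_S), \qquad \det(\bM_S) = \sum_{\sigma \in \mathrm{Sym}(S)} \operatorname{sgn}(\sigma)\prod_{i \in S}\bM_{i,\sigma(i)},
\]
where $\bM_S$ is the principal submatrix on the rows and columns in $S$. Multiplying out $\abs{\det(\Id - z\bM)}^2 = \det(\Id - z\bM)\,\overline{\det(\Id - z\bM)}$ and taking expectations, I would argue that the only $(S,\sigma,T,\pi)$ with $\E_{\bM}\bigl[\prod_{i\in S}\bM_{i,\sigma(i)}\prod_{j\in T}\overline{\bM_{j,\pi(j)}}\bigr] \ne 0$ are those with $S = T$ and $\sigma = \pi$: each off-diagonal entry $\bM_{ab}$ appears at most once among the unconjugated factors and at most once among the conjugated ones, and since $\bM_{ab}$ and $\bM_{ba}$ are independent in the asymmetric model, the expectation factors over entries and vanishes unless every entry that occurs occurs exactly once of each type; moreover $\bM_{ii} = 0$ forces $\sigma$ to be a derangement of $S$. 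A surviving term contributes $\operatorname{sgn}(\sigma)^2\prod_{i\in S}\E\abs{\bM_{i,\sigma(i)}}^2 = n^{-\abs{S}}$, so with $D_k$ the number of derangements of $[k]$,
\[
    \E_{\bM}\abs{\det(\Id - z\bM)}^2 = \sum_{k=0}^{n}\binom{n}{k}\frac{D_k}{n^k}\,\abs{z}^{2k} \;\le\; \sum_{k\ge 0}\abs{z}^{2k} = \frac{1}{1 - \abs{z}^2},
\]
using $\binom{n}{k}\le n^k/k!$ and $D_k \le k!$; note this is independent of $\theta$ and of $n$.

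The rest is optimization. Setting $\abs{z} = 1/\tau$, \Cref{cor:specrad-bound} gives $\E_{\bM}\rho(\bM)^2 \le \tau^2/(1-\tau^{-2}) = \tau^4/(\tau^2-1)$, and $\tau = \sqrt 2$ yields $\E_{\bM}\rho(\bM)^2 \le 4$, proving the first bullet with $C = 4$ (the exact identity $\sum_k D_k x^k/k! = e^{-x}/(1-x)$ sharpens the displayed bound to $e^{-\abs{z}^2}/(1-\abs{z}^2)$ and the constant accordingly). For the second bullet, apply \Cref{cor:num-outliers-bound} with $\tau = \sqrt{1+\eps}$ and $\delta = \eps$, so that the outlier threshold $\tau\sqrt{1+\delta}$ equals exactly $1+\eps$; then $k_{\bM} = \abs{\{\lambda \in \spec(\bM) : \abs{\lambda} > 1+\eps\}}$ and $\E_{\bM}(1+\eps)^{k_{\bM}} \le (1-(1+\eps)^{-1})^{-1} = \tfrac{1+\eps}{\eps}$, whence Jensen's inequality (convexity of $x \mapsto (1+\eps)^x$) gives $\E_{\bM}k_{\bM} \le \log\bigl(\tfrac{1+\eps}{\eps}\bigr)/\log(1+\eps) \eqqcolon C'(\eps)$.

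The one point that needs genuine care is the combinatorial claim that only the ``diagonal'' pairings $S=T$, $\sigma=\pi$ survive the expectation; the remaining manipulations are bookkeeping. I do not expect a real obstacle there --- full independence of the $n^2-n$ off-diagonal entries, together with mean-zeroness, collapses the double sum at once, and an entry is only ever paired with its own conjugate, so that only second moments enter (which is exactly why $\E\abs{\bM_{ij}}^2 = 1/n$ with no control on higher moments already suffices). The analogous computation is substantially less trivial in the Wigner and random-regular-graph instantiations later in the paper, where symmetry (or the configuration-model structure) admits many more pairings and the surviving sum becomes an honest walk-counting problem; but for Girko matrices it is essentially a one-liner.
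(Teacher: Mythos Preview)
Your proposal is correct and follows essentially the same route as the paper: expand $\abs{\det(\Id - z\bM)}^2$ as a double sum over permutations, use independence and mean-zeroness to kill all cross terms so that only the diagonal pairing $S=T$, $\sigma=\pi$ survives, and bound the resulting derangement sum by the geometric series $\sum_k \tau^{-2k} = \tau^2/(\tau^2-1)$; the paper then invokes \Cref{cor:specrad-bound,cor:num-outliers-bound} with the same choices $\tau = \sqrt{1+\eps}$, $\delta = \eps$ for the second bullet. Your principal-minor expansion is cosmetically different from the paper's expansion over $S_n$ with the non-fixed-point set $\NF(\pi)$ playing the role of your $S$, and you additionally spell out the Jensen step $(1+\eps)^{\E k_{\bM}} \le \E(1+\eps)^{k_{\bM}}$ and the explicit constant $C=4$, but the substance is identical.
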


\begin{remark}
    In the setting of \emph{Girko matrices}, where one first fixes a mean-$0$ and variance-$1$ random variable $\bX$, and considers the ensemble of $n\times n$ random matrices $\bM$ with independent copies of $\bX/\sqrt{n}$ for $n\to\infty$, it was recently proved by Bordenave, Chafa{\"i}, and Garc{\'i}a-Zelada \cite{BCG22} that the spectral radius of $\bM$ is $1+o_n(1)$ with probability $1-o_n(1)$.
    Observe that this setting does \emph{not} encompass random matrix ensembles where the distribution of the entries is allowed to depend on $n$, as is the case in sparse directed random graph models.
    While we work in a more general setting where the entries of $\bM$ may depend on $n$ in an arbitrary way, our results are necessarily weaker on two fronts: obtaining only an $O(1)$ bound on the spectral radius, rather than $1+o(1)$, and guarantees that hold with constant probability rather than with high probability.
    See \Cref{rem:girko-tightness} for examples of random matrix ensembles that witness these limitations. We consider it an interesting question to investigate whether there are settings where these constant-probability bounds can be boosted to high-probability bounds. For example, in the setting where $\bM$ is a draw from the (normalized) Gaussian orthogonal ensemble, which we shortly discuss, a reasonably straightforward argument based on concentration of Lipschitz functions succeeds at doing so.
\end{remark}

\begin{remark}
    We further point out that the bound on the number of outlier eigenvalues in \Cref{thm:main-girko}(b) is \emph{better} than the bound one would obtain if they show that the empirical spectral distribution weakly converges to some limiting law---indeed, this would only imply that $o(n)$ of the eigenvalues are outliers, not $O(1)$.
\end{remark}

\parhead{Hermitian random matrices with independent entries.}
Our story in the setting where $\bM$ is a Hermitian random matrix needs more setup.
In the sequel, in addition to first and second moment assumptions, we also make a very mild assumption on the fourth moment; nothing interesting is true without such an assumption.
We say that an $n\times n$ random matrix $\bM$ is a \emph{non-asymptotic Wigner matrix} if $\bM$ is Hermitian, its entries above the diagonal are independent, and satisfy $\E \bM_{i,j} = 0$, $\E \abs{ \bM_{i,j} }^2 = \frac{1}{n}$, and $\E \abs{ \bM_{i,j} }^4 \le \frac{1}{n}$.
We further assume that the diagonal of $\bM$ is zero to avoid uninteresting complications.

We control the spectral radius of $\bM$ via its \emph{nonbacktracking matrix} $B_{\bM}$, whose rows and columns are indexed by the set of directed edges of the $n$-vertex complete graph, defined as follows:
\[
    B_{\bM}[ij,k\ell] =
    \begin{cases}
        \bM_{k,\ell} &\text{if }j=k\text{ and }i\ne\ell \\
        0 &\text{otherwise}
    \end{cases}
\]
We prove the following for symmetric independent ensembles.
\begin{theorem} \label{thm:main-Wigner}
    Let $\bM$ be an $n\times n$ non-asymptotic Wigner matrix.
    Then,
    \begin{itemize}
        \item $\E_{\bM} \rho\parens*{B_{\bM}}^2 \le C$ for an absolute constant $C > 1$.
        \item For any $\eps > 0$, there is a constant $C'(\eps) > 1$ such that with probability at least $1-\eps$,
        \[
            \rho(\bM) \le C'(\eps)\cdot\parens*{1+\max_{i\in[n]} \norm*{\bM_i}_2}\mper
        \]
    \end{itemize}
\end{theorem}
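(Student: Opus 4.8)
The plan is to handle the two items separately, with the bound on $\E_{\bM}\rho(B_{\bM})^2$ being the substantive part and the second item then following by a deterministic argument built on the Ihara--Bass identity. For the first item I would apply \Cref{cor:specrad-bound} to the ensemble of nonbacktracking matrices $B_{\bM}$, with $\tau$ a large absolute constant; it then suffices to show $\E_{\btheta}\E_{\bM}\abs{\det(\Id - \tfrac{e^{\im\btheta}}{\tau}B_{\bM})}^2 = O(1)$. Writing $\det(\Id - zB_{\bM}) = \sum_{j \ge 0}(-z)^j e_j(B_{\bM})$ in terms of the elementary symmetric polynomials of the spectrum --- equivalently, $e_j(B_{\bM}) = \sum_{\abs{S}=j}\det (B_{\bM})_S$ is the sum of $j\times j$ principal minors --- and averaging over $\btheta$ with $z = e^{\im\btheta}/\tau$ kills all cross terms, leaving $\sum_{j \ge 0}\tau^{-2j}\,\E_{\bM}\abs{e_j(B_{\bM})}^2$, so the task is to bound this series by $O(1)$. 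I would then expand $\E_{\bM}\abs{e_j(B_{\bM})}^2$ combinatorially: a nonzero permutation term in $\det (B_{\bM})_S$ is exactly an edge-disjoint union of closed nonbacktracking walks in $K_n$ of total length $j$, weighted by the product of the corresponding entries of $\bM$, so $\E_{\bM}\abs{e_j(B_{\bM})}^2$ is a signed sum over \emph{pairs} $(\mathcal W, \mathcal W')$ of such walk-collections, and the mean-zero hypothesis leaves only the pairs in which every edge of $K_n$ is covered at least twice by $\mathcal W \cup \mathcal W'$.

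Two structural features then tame this sum. First, because the nonzero entries of $B_{\bM}$ are exactly the variables $\bM_{k\ell}$ and any given directed edge occurs at most once in a single permutation term, each \emph{undirected} edge acquires total degree at most $2$ from each of $\mathcal W$, $\mathcal W'$, hence at most $4$ in all; so only the quantities $\E\abs{\bM_{ij}}^2,\ \E\abs{\bM_{ij}}^3,\ \E\abs{\bM_{ij}}^4$ can appear, and each is $\le 1/n$ (the cubic one by Cauchy--Schwarz) --- this is precisely what the fourth-moment hypothesis buys. Second, a closed nonbacktracking walk always lives inside a subgraph of minimum degree $\ge 2$ (at each visited vertex the entering and leaving edges are distinct), so the graph $H$ of all touched edges has minimum degree $\ge 2$ and therefore $\abs{V(H)} \le \abs{E(H)}$; as there are at most $n^{\abs{V(H)}}$ vertex-labelings while the moment product contributes at most $n^{-\abs{E(H)}}$, each combinatorial ``shape'' contributes at most $n^{\abs{V(H)}-\abs{E(H)}} \le 1$. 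The leading shapes --- vertex-disjoint unions of simple cycles, each edge traversed exactly twice --- contribute $O(1)$ to $\sum_j \tau^{-2j}(\cdots)$, and every other shape carries an extra factor $1/n$ (when $\abs{E(H)} > \abs{V(H)}$) or an extra $\tau^{-2}$ for each traversal beyond doubling. The main obstacle is exactly the enumeration underlying this: one must bound the number of shapes of a given ``excess'' --- in the spirit of the nonbacktracking walk counts in the work of Bordenave--Lelarge--Massouli\'e and Bordenave, roughly exponential in $j$ times a polynomial-in-$j$ penalty per unit of excess, reflecting that the combined walk structure is tree-like off a controlled set of edges --- sharply enough that, balanced against the $1/n$ and $\tau^{-2j}$ factors, the whole series stays $O(1)$ for $\tau$ a large enough absolute constant. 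Feeding this back into \Cref{cor:specrad-bound} yields $\E\rho(B_{\bM})^2 \le \tau^2 \cdot O(1) = O(1)$.

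For the second item, by the Ihara--Bass identity for Hermitian weights we have $\det(\Id_{n(n-1)} - zB_{\bM}) = (1-z^2)^{\binom n2 - n}\det\bigl((1-z^2)\Id_n + z^2\mathbf D - z\bM\bigr)$, where $\mathbf D$ is the diagonal matrix with $\mathbf D_{ii} = \norm*{\bM_i}_2^2$. If $\rho(B_{\bM}) = r$, then $F(t) := (1-t^2)\Id_n + t^2\mathbf D - t\bM$ is nonsingular for every real $t \in (0, 1/\max\{1,r\})$. Now $F(t)$ is singular iff $\tfrac1t - t \in \spec(\bM - t\mathbf D)$, and since $\mathbf D \succeq 0$, for $t > 0$ the largest eigenvalue of $\bM - t\mathbf D$ lies in $\bigl[\lambda_{\max}(\bM) - t\max_i\norm*{\bM_i}_2^2,\ \lambda_{\max}(\bM)\bigr]$; hence $t \mapsto \lambda_{\max}(\bM - t\mathbf D) - (\tfrac1t - t)$ is continuous, tends to $-\infty$ as $t \to 0^+$, and is nonnegative at some $t \le 2/\lambda_{\max}(\bM)$, so $\det F$ vanishes at some $t^\dagger \le 2/\lambda_{\max}(\bM)$. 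Comparing with the nonsingularity of $F$ on $(0,1/\max\{1,r\})$ forces $\lambda_{\max}(\bM) \le 2\max\{1, r\}$, unless $\lambda_{\max}(\bM) \le 2\max_i\norm*{\bM_i}_2$ to begin with; running the same argument on $-\bM$ (whose nonbacktracking matrix is $-B_{\bM}$, with the same spectral radius and the same $\mathbf D$) bounds $-\lambda_{\min}(\bM)$ identically, so deterministically $\rho(\bM) \le 2\rho(B_{\bM}) + 2\max_i\norm*{\bM_i}_2 + 2$. Combining with the first item and Markov's inequality --- so that $\rho(B_{\bM}) \le \sqrt{C/\eps}$ with probability at least $1-\eps$ --- gives $\rho(\bM) \le C'(\eps)\,(1 + \max_i\norm*{\bM_i}_2)$ with $C'(\eps) = 2\sqrt{C/\eps} + 2$.
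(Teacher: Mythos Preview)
Your overall architecture coincides with the paper's: for the first item you apply \Cref{cor:specrad-bound} to $B_{\bM}$ and expand $\E_{\bM}\abs{\det(\Id - zB_{\bM})}^2$ combinatorially, using independence and the moment hypotheses to restrict to pairs $(S,S')$ whose footprint $R=\SUD(S\cup S')$ has every edge covered at least twice (so $v(R)\le e(R)$); for the second item you pass through Ihara--Bass and Markov. Your $\btheta$-averaging to reduce to $\sum_j\tau^{-2j}\E\abs{e_j(B_{\bM})}^2$ is a harmless repackaging, and your derivation of the deterministic inequality $\rho(\bM)\le 2\rho(B_{\bM})+2\max_i\norm{\bM_i}_2+2$ is a correct direct proof of what the paper simply cites as \Cref{lem:ib-consequence}.

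There is, however, a real gap in item~1. After fixing the footprint $R$ with $k$ edges and cashing in $n^{v(R)-e(R)}\le 1$, each surviving term still carries the integer factor $N(S)\,\ol{N(S')}$, where $N(S)=\sum_{\pi\in\NBP(S)}(-1)^{\NTCyc(\pi)}$. You propose to absorb this into ``the number of shapes of a given excess'' and appeal to nonbacktracking-walk counts \`a la Bordenave--Lelarge--Massouli\'e, but that machinery bounds numbers of walks, not this signed permanent-like count; and the naive bound $\abs{N(S)}\le\abs{\NBP(S)}\le\prod_v(d_v^S)!$ is far too large---for $R$ with a vertex of degree $\Theta(k)$ it grows like $k^{\Theta(k)}$, so the sum over $R$ diverges for every constant $\tau$. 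The paper closes exactly this hole with \Cref{lem:simple-form-det}: decomposing $\pi\in\NBP(S)$ into local bijections $\pi_v:\In_S(v)\to\Out_S(v)$, one computes $\sum_{\pi_v}\sgn(\pi_v)$ as the determinant of an explicit $\{0,1\}$-matrix, finding that (i)~$N(S)=0$ unless $S\in\wt\calH$, and (ii)~in that case $\abs{N(S)}\le\prod_v d_v^S$. Together with the $16^k$ choices of $(S,S')$ over a fixed $R$ and AM--GM on $\prod_v(\deg_R v)^2$, this yields the decisive $O(1)^k$ bound on the combinatorial factor, after which your geometric-series endgame goes through. This sign-cancellation is the one nontrivial idea you are missing; the rest of your sketch is essentially the paper's proof.
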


\begin{remark}
    One can check that when $\bM$ is a ``nice'' random matrix ensemble, such as when its entries are independent standard gaussians, $\max_{i\in[n]} \norm*{\bM_i}_2$ concentrates extremely tightly around $1$, thus recovering the known bound of $2$ on the operator norm up to a constant factor.

    More generally, our bound is tight up to constant factors since for any Hermitian matrix $M$, $\rho(M)\ge \max_{i\in[n]}\norm*{M_i}_2$.
\end{remark}

\begin{remark}
    The bound on $\E_{\bM}\rho\parens*{B_{\bM}}^2$ recovers the celebrated result of Bordenave, Lelarge, and Massouli{\'e} \cite{BLM15} on the eigenvalues of nonbacktracking matrix of \erdos--\renyi graphs and stochastic block models of constant average degree up to absolute constant factors (cf.\ \cite{FM17}).
\end{remark}


\parhead{Random regular graphs.}
Let $\bG$ be a random $d$-regular graph on $n$ vertices.
The eigenvalues of the adjacency matrix $A_{\bG}$ of $\bG$ have been a subject of intense study in random matrix theory and the study of expander graphs.
It was conjectured by Alon \cite{Alo86} that the Alon--Boppana bound \cite{Nil91} is tight for random $d$-regular graphs---besides the trivial eigenvalue of $d$ corresponding to the all-ones vector, all eigenvalues of $A_{\bG}$ are bounded in magnitude by $2\sqrt{d-1}+o_n(1)$.
Friedman \cite{Fri08} proved this conjecture two decades later in a highly technical tour de force using the trace moment method.
Later, Bordenave \cite{Bor19} gave a significantly simpler, though still far from easy, proof also based on the trace moment method.
Notably, a year ago, Chen, Garza Vargas, Tropp and van Handel \cite{CGVTvH24} gave a significantly simpler proof via a new method they pioneered based on connecting the spectra of random matrices to a series expansion in $1/n$ of trace moments of smooth functions of the adjacency matrix $A_{\bG}$.
A parallel line of work \cite{BHY19,HY24} culminating in a breakthrough of Huang, McKenzie, and Yau \cite{HMY24} studied the \emph{resolvent} to prove that all eigenvalues of $A_{\bG}$ are at most $2\sqrt{d-1}$ (without the $o_n(1)$) with probability $\approx0.69$, along with other detailed information of the random matrix ensemble such as the distribution of the fluctuation of the largest eigenvalues, rigidity of the bulk eigenvalues, and delocalization of eigenvectors.

In this work, we use \Cref{thm:jensend-jensen} to prove a constant probability version of Friedman's theorem with an eigenvalue bound that is tight up to a universal constant.
While the result is quantitatively weaker than the results achieved by all of the aforementioned works, we believe our proof is simpler.
In particular, rather loose arguments to control the expression on the right-hand side of \Cref{eq:core-inequality} when $M$ is the nonbacktracking matrix of a random regular graph lose only a constant factor in the spectral radius.

\begin{theorem} \label{thm:main-dreg}
    Let $\bG$ be a random $d$-regular graph drawn from the configuration model.
    For every $\eps > 0$, there is a constant $C(\eps)$ such that with probability $1-\eps$, $\max\{\lambda_2(A_{\bG}), -\lambda_n(A_{\bG})\}$ is at most $C(\eps)\sqrt{d-1}$.
\end{theorem}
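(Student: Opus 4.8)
The plan is to reduce the problem to the nonbacktracking matrix $B_{\bG}$ of $\bG$ and apply \Cref{thm:jensend-jensen}, working not with $B_{\bG}$ itself but with a version from which the ``trivial'' part of its spectrum has been removed.

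\textbf{Step 1 (Ihara--Bass reduction).} For a $d$-regular graph the Ihara--Bass identity is $\det(\Id - zB_{\bG}) = (1-z^2)^{m-n}\det\bigl((1+(d-1)z^2)\Id - zA_{\bG}\bigr)$ with $m = nd/2$. Taking reciprocals of the roots, $\spec(B_{\bG})$ consists of $\pm 1$ (with total multiplicity $2(m-n)$), the Perron value $d-1$, and, for each $\lambda \in \spec(A_{\bG})$ with $\lambda \ne d$, a pair $\mu_{\pm}(\lambda)$ with $\mu_+\mu_- = d-1$ and $\mu_+ + \mu_- = \lambda$; these satisfy $|\mu_{\pm}(\lambda)| = \sqrt{d-1}$ when $|\lambda| \le 2\sqrt{d-1}$ and $|\mu_+(\lambda)| \ge |\lambda|/2$ otherwise. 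So it is enough to show that with probability $1-\eps$ there is no eigenvalue $\mu$ of $B_{\bG}$, other than the Perron value, with $|\mu| > C(\eps)\sqrt{d-1}$; this forces $\max\{\lambda_2(A_{\bG}), -\lambda_n(A_{\bG})\} \le 2C(\eps)\sqrt{d-1}$. (We may assume $d$ is large, the case of bounded $d$ being covered by the trivial bound $\lambda_2 \le d$.)

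\textbf{Step 2 (applying the Jensen bound).} Applied to $B_{\bG}$ as is, \Cref{thm:jensend-jensen} is useless: the Perron value, the $\Theta(nd)$ eigenvalues at $\pm 1$, and even the deterministic low-order moments of the bulk make $\E_{\btheta}\bigl|\det(\Id - \tfrac{e^{\im\btheta}}{\tau}B_{\bG})\bigr|^2$ blow up exponentially in $n$ for $\tau \asymp \sqrt{d-1}$, while contributing nothing to $\lambda_2$. The fix is to strip these off --- e.g.\ by a rank-one shift removing the Perron value, and by passing to the operator of nonbacktracking walks of a carefully chosen length $\ell = \Theta(\log_{d-1} n)$ restricted to $\mathbf 1^{\perp}$ (call it $M_{\bG}$), which on the high-probability tangle-free event agrees with a power of $B_{\bG}|_{\mathbf 1^{\perp}}$ while having much tamer determinantal statistics. \Cref{thm:jensend-jensen} applied to $M_{\bG}$, with an appropriate $\tau$, then says that the product of $(|\mu|/\tau)^2$ over its outlier eigenvalues $\mu$ is at most $\E_{\btheta}\bigl|\det(\Id - \tfrac{e^{\im\btheta}}{\tau} M_{\bG})\bigr|^2$, and an outlier here corresponds, via Step 1, to $\lambda_2$ or $-\lambda_n$ exceeding $O(\sqrt{d-1})$.

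\textbf{Step 3 (the combinatorial estimate --- the main obstacle).} The crux is to bound the expected right-hand side of that inequality by an absolute constant. Averaging over $\btheta$ kills cross terms of unequal degree, so it becomes $\sum_{k \ge 0}\tau^{-2k}\,\E_{\bG}\bigl[(z^{k}\text{-coefficient})^{2}\bigr]$, and each coefficient is a signed count of pairs of nonbacktracking ``cycle systems'' carried by $\bG$. These are estimated using the edge-probabilities of the configuration model together with the fact that $\bG$ is tangle-free at the relevant scale with probability $1-o(1)$. The essential point, parallel to the role of $\E \bM_{ij} = 0$ in the Wigner case, is that stripping off the Perron/mean part already cancels the dominant short-cycle contributions (which would otherwise grow with $d$), leaving a geometrically convergent sum that crude counting controls up to a universal constant. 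Carrying out this count --- and in particular getting the $d$-dependence right while handling the dependencies intrinsic to the configuration model and the error from conditioning on tangle-freeness --- is the step I expect to require the most care.

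\textbf{Step 4 (conclusion).} Once the expected right-hand side is bounded by an absolute constant $Q$, pick $K = K(\eps)$ large enough that a single outlier beyond the allowed radius forces the left-hand product to exceed $Q/\eps$; Markov's inequality then bounds the probability of such an outlier by $\eps$. By Step 1 this yields $\max\{\lambda_2(A_{\bG}), -\lambda_n(A_{\bG})\} \le C'(\eps)\sqrt{d-1}$ with probability $1 - \eps$, completing the proof.
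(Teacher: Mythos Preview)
Your plan diverges from the paper's at the very first move, and the divergence matters. You work with $B_{\bG}$, the nonbacktracking operator of the graph itself, and then try to neutralise its trivial spectrum by a rank-one shift plus passing to an $\ell$-th power with $\ell = \Theta(\log_{d-1} n)$, conditioning on tangle-freeness. The paper instead \emph{centres first}: it sets $\bM_{ij} = \tfrac{1}{\sqrt d}\bigl(A_{ij} - \tfrac{d}{n}\bigr)$ (diagonal zeroed), takes the weighted nonbacktracking matrix $B_{\bM}$ of that matrix --- indexed by all $n(n-1)$ directed pairs, with entries $\bM_{k\ell}$ --- and applies \Cref{thm:jensend-jensen} directly to $\det(\Id - zB_{\bM})$. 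The centering already kills the Perron direction, so no stripping, no powers, and no tangle-freeness are needed; the whole argument reduces (via \Cref{lem:simple-form-det} and \Cref{thm:moment-assumption-to-spec-norm}) to checking the moment condition \Cref{ass:mixed-moment-bound} for products of $\bone_{e\in\bG}-\tfrac{d}{n}$, which is carried out in \Cref{sec:mixed-moments-dreg}. One then passes from $\rho(B_{\bM})\le O(1)$ to $\rho(\bM)\le O(1)$ using \Cref{lem:ib-consequence} plus an elementary bound on $\max_i\|\bM_i\|_2$; since $\bM = \tfrac{1}{\sqrt d}(A_{\bG}-\tfrac{d}{n}J)$ annihilates $\mathbf 1$ and agrees with $\tfrac{1}{\sqrt d}A_{\bG}$ on $\mathbf 1^\perp$, this is exactly the desired bound on $\max\{\lambda_2,-\lambda_n\}$.

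Your Step 3 is not a step but a gap: it is the entire content of the proof, and you have not done it. Worse, the object you would have to control --- $\E\bigl|\det(\Id - zM_{\bG})\bigr|^2$ for $M_{\bG}$ built from length-$\Theta(\log n)$ nonbacktracking walks, conditioned on tangle-freeness --- has the same combinatorial shape as the trace-method calculations in Bordenave's proof; the Jensen inequality is doing almost no work, since all the difficulty has migrated into your definition of $M_{\bG}$. The paper's point is precisely that centring \emph{before} taking the nonbacktracking matrix makes the determinantal second moment a sum over fixed-size subgraph moments of centred indicators, which can be handled by direct (if delicate) calculations in the configuration model without ever looking at long walks or invoking local tree structure.
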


\parhead{Organization.}
In \Cref{sec:complex-prelims}, we state and derive the basic complex analytic tools we will need.
In \Cref{sec:girko}, we will prove \Cref{thm:main-girko}, our main result on the spectral radius of non-asymptotic Girko matrices.
In \Cref{sec:nb-prelims}, we describe the connection between the spectral radii of a matrix and its nonbacktracking matrix, and derive some properties of the determinant of a nonbacktracking matrix.
Finally, in \Cref{sec:Wigner}, we prove \Cref{thm:main-Wigner}, our main result on non-asymptotic Wigner matrices, and in \Cref{sec:mixed-moments-dreg}, we prove \Cref{thm:main-dreg}, our spectral radius bound on the random $d$-regular ensemble.


\section{Complex analytic facts}
\label{sec:complex-prelims}

The only tool we will need is Jensen's formula from complex analysis.

\begin{theorem}[{Jensen's formula~\cite[Eq. (44), Chapter 5.3.1]{Ahl79}}]
    \label{th:jensen-formula}
    Let $f : \C \to \C$ be holomorphic, and let $a_1,\dots,a_k$ be the zeros (with multiplicity) of $f$ inside the scaled disk $r\bbD$, for some $r > 0$. Then,
    \[
        \E_{\btheta\sim[0,2\pi]} \log \abs*{ f(re^{\im\btheta}) } = \log|f(0)| + \sum_{t=1}^k \log\parens*{\frac{r}{|a_t|}} \mper
    \]
\end{theorem}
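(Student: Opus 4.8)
The plan is to establish Jensen's formula by the classical route: divide the zeros out of $f$ using Blaschke-type factors that have unit modulus on the circle $\abs*{z}=r$, and then invoke the mean value property for harmonic functions. We may assume $f\not\equiv 0$, and I would first treat the generic case in which $f(0)\ne 0$ and $f$ has no zero on $\abs*{z}=r$; both restrictions get removed at the end. Since $f$ is entire and not identically zero, its zeros are isolated, so only finitely many of them---namely $a_1,\dots,a_k$ counted with multiplicity---lie in the compact disk $r\overline{\bbD}$.

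For each $t$ set $b_t(z)\coloneqq \frac{r(z-a_t)}{r^2-\overline{a_t}z}$, and define $g(z)\coloneqq f(z)\prod_{t=1}^k b_t(z)^{-1} = f(z)\prod_{t=1}^k \frac{r^2-\overline{a_t}z}{r(z-a_t)}$. Three elementary observations do all the work. (i) On $\abs*{z}=r$ we have $r^2=z\overline z$, hence $r^2-\overline{a_t}z=z(\overline z-\overline{a_t})$ and $\abs*{r^2-\overline{a_t}z}=r\abs*{z-a_t}$; so $\abs*{b_t(z)}=1$ and $\abs*{g(z)}=\abs*{f(z)}$ on the circle. (ii) The pole of $b_t^{-1}$ at $a_t$ cancels precisely one order of the zero of $f$ at $a_t$, while the new zero of $b_t^{-1}$ sits at $r^2/\overline{a_t}$, of modulus $r^2/\abs*{a_t}>r$; thus $g$ extends to a holomorphic, nowhere-vanishing function on a neighborhood of $r\overline{\bbD}$. (iii) $\abs*{g(0)}=\abs*{f(0)}\prod_{t=1}^k \frac{r^2}{r\abs*{a_t}}=\abs*{f(0)}\prod_{t=1}^k \frac{r}{\abs*{a_t}}$.

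Because $g$ is holomorphic and zero-free on a simply connected neighborhood of $r\overline{\bbD}$, it admits a holomorphic logarithm there, so $\log\abs*{g}=\Re\log g$ is harmonic; the mean value property at the center of the disk then gives $\E_{\btheta\sim[0,2\pi]}\log\abs*{g(re^{\im\btheta})}=\log\abs*{g(0)}$. Combining this with (i) and (iii),
\[
    \E_{\btheta\sim[0,2\pi]}\log\abs*{f(re^{\im\btheta})}=\E_{\btheta\sim[0,2\pi]}\log\abs*{g(re^{\im\btheta})}=\log\abs*{g(0)}=\log\abs*{f(0)}+\sum_{t=1}^k\log\parens*{\frac{r}{\abs*{a_t}}}\mper
\]
It remains to discharge the two temporary assumptions. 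If $f(0)=0$, write $f(z)=z^m h(z)$ with $h(0)\ne 0$, apply the identity to $h$, and record the origin as a zero of multiplicity $m$ (equivalently, recenter the disk at a point where $f$ does not vanish); note that in the applications in this paper $f(z)=\det(\Id-zM/\tau)$ satisfies $f(0)=1$, so this case never arises. If $f$ has zeros on $\abs*{z}=r$, choose $r'\uparrow r$ avoiding the finitely many values $\abs*{a_1},\dots,\abs*{a_k}$, apply the already proven identity at each $r'$, and let $r'\to r$.

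I expect the only genuine obstacle to be this last limiting step: one must verify that $\btheta\mapsto\log\abs*{f(re^{\im\btheta})}$ is integrable---it has at most finitely many singularities on the circle, each logarithmic and hence integrable---and that both sides of the identity depend continuously on the radius as $r'\uparrow r$, so that the limit may be passed inside the integral. The Blaschke-factor manipulation and the appeal to the mean value property are otherwise entirely routine.
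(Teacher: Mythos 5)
Your proof is correct and is exactly the classical Blaschke-factor argument (divide out the zeros, observe the factors have unit modulus on the circle, apply the mean value property to the harmonic function $\log\abs{g}$), which is the same textbook proof the paper implicitly relies on by citing Ahlfors rather than proving the formula itself. Your handling of the edge cases is also appropriate: the implicit hypothesis $f(0)\ne 0$ is harmless here since the paper only uses $f(z)=\det(\Id - zM/\tau)$ with $f(0)=1$, and the boundary-zero limiting argument you flag is the standard, routine technicality.
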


In particular, using Jensen's inequality on the above easily yields the following.

\begin{corollary}
    \label{cor:jensen-jensen-formula}
    Let $f : \C \to \C$ be holomorphic, and let $a_1,\dots,a_k$ be the zeros (with multiplicity) of $f$ inside the scaled disk $r\bbD$. Then,
    \[
        |f(0)|^2 \cdot \prod_{t=1}^{k} \left( \frac{r}{|a_t|} \right)^2 \le \E_{\btheta\sim[0,2\pi]} \abs*{ f(r e^{\im\btheta}) }^2 \mper
    \]
\end{corollary}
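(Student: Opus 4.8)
The plan is to read off \Cref{cor:jensen-jensen-formula} directly from \Cref{th:jensen-formula} using only concavity of the logarithm. We may assume $f \not\equiv 0$, since otherwise the inequality is trivial, and $f(0) \neq 0$, as is the case in all of our applications; \Cref{th:jensen-formula} then guarantees in particular that $\btheta \mapsto \log\abs*{f(re^{\im\btheta})}$ is integrable over $[0,2\pi]$, as it can fail to be finite only at the finitely many points where $f$ vanishes on the circle $\abs*{z} = r$. So every quantity appearing below is well-defined and finite.

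First I would rewrite \Cref{th:jensen-formula} in exponentiated form: multiplying the identity there by $2$ and applying $\exp$ gives
\[
    |f(0)|^2 \cdot \prod_{t=1}^{k} \parens*{\frac{r}{|a_t|}}^2 = \exp\parens*{ 2\, \E_{\btheta\sim[0,2\pi]} \log\abs*{f(re^{\im\btheta})} } = \exp\parens*{ \E_{\btheta\sim[0,2\pi]} \log\abs*{f(re^{\im\btheta})}^2 }\mper
\]
It therefore suffices to show that the rightmost expression is at most $\E_{\btheta\sim[0,2\pi]} \abs*{f(re^{\im\btheta})}^2$.

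This last step is exactly Jensen's inequality for the concave function $\log$ (equivalently, for the convex function $\exp$), applied to the nonnegative random variable $\bX \coloneqq \abs*{f(re^{\im\btheta})}^2$ with $\btheta$ uniform on $[0,2\pi]$: concavity gives $\E\log\bX \le \log\E\bX$, i.e.
\[
    \E_{\btheta\sim[0,2\pi]} \log\abs*{f(re^{\im\btheta})}^2 \le \log\parens*{ \E_{\btheta\sim[0,2\pi]} \abs*{f(re^{\im\btheta})}^2 }\mper
\]
Exponentiating both sides (using monotonicity of $\exp$) and combining with the previous display yields the claim. I do not expect any genuine obstacle here: all of the content is in \Cref{th:jensen-formula}, and the only point requiring a moment's care is the integrability remark above, which ensures that Jensen's inequality is being applied to an honest random variable whose relevant expectations are finite.
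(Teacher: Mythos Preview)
Your proof is correct and follows exactly the same route as the paper's: apply Jensen's formula, then Jensen's inequality to the concave logarithm, and exponentiate. Your additional remarks on integrability and the assumption $f(0)\ne 0$ are fine and consistent with the intended applications.
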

\begin{proof}
    By Jensen's inequality and Jensen's formula, we have:
    \[ \log \E_{\btheta\sim[0,2\pi]} \abs*{ f(re^{\im\btheta}) }^2 \ge  \E_{\btheta\sim[0,2\pi]} \log \abs*{ f(re^{\im\btheta}) }^2 = 2 \cdot \left( \log|f(0)| + \sum_{t=1}^k \log\parens*{\frac{r}{|a_t|}} \right) \mper \]
    We then obtain the desired statement by exponentiating the above inequality.
\end{proof}

\Cref{thm:jensend-jensen}, which we restate below for convenience, is a special case of \Cref{cor:jensen-jensen-formula} obtained by specializing $f(z) = \det(I-z M)$, and choosing $r = \frac{1}{\tau}$, since the roots of $f$ are $\left\{\frac{1}{\lambda}\right\}_{\lambda\in\spec(M) \mcom \lambda \ne 0}$.

\jensendjensen*

\begin{remark}
    The same inequality holds if we choose $f(z)$ as $\det(I-z M)\cdot g(z)$ for any holomorphic function $g$ that is zero-free on $r\bbD$; in our case $g$ is chosen as the constant function, but in some cases one may obtain mileage by a clever choice of $g$ adapted to the ensemble at hand.
    Indeed, we believe that when studying symmetric matrix ensembles, passing to the nonbacktracking matrix implicitly involves choosing $g$ as some suitable non-constant function.
\end{remark}

\section{Spectral radius bounds for non-asymptotic Girko matrices}
\label{sec:girko}

In this section, we prove \Cref{thm:main-girko}, restated below.
\girkomain*

\begin{proof}[Proof of \Cref{thm:main-girko}]
    By \Cref{cor:specrad-bound,cor:num-outliers-bound}, to prove both parts, it suffices to prove for any $\tau = 1+\eps$:
    \[
        \E_{\bM} \E_{\btheta\sim[0,2\pi]} \abs{ \det(I-\frac{e^{\im\btheta}}{\tau} \cdot \bM) }^2 \le O_{\eps}(1)\mper
    \]
    Using the fact that the diagonal of $\bM$ is equal to $0$, we have for any $z$ such that $|z| = \frac{1}{\tau}$:
        \begin{align*}
            \E_{\bM} \abs{ \det(I-z\cdot \bM) }^2 &= \E_{\bM} \det\left( \Id - \bM z \right) \cdot \det\left( \Id - \bM^* z^* \right) \\
            &= \sum_{\pi,\sigma \in S_n}  \E_{\bM} \sgn(\sigma) \sgn(\pi) \cdot \left( \prod_{\substack{i \in [n] \\ \sigma(i) \ne i}} - \bM_{i\sigma(i)} z \right) \cdot \left( \prod_{\substack{i \in [n] \\ \pi(i) \ne i}} - \bM_{i\pi(i)}^* z^* \right) \mper
    \intertext{Observe that a term corresponding to $(\pi,\sigma)$ in the above summation vanishes if there is some $i \in [n]$ such that $\sigma(i) \ne \pi(i)$.
    Therefore, denoting $\NF(\sigma) = \{i \in [n] : \sigma(i) \ne i\}$, we can continue the above chain of equalities as:}
       &= \sum_{\pi \in S_n} \tau^{-2|\NF(\pi)|} \cdot \prod_{i \in \NF(\pi)} M_{i\pi(i)}^2 \\
        &= \sum_{\pi \in S_n} \left(\tau^2 n\right)^{-\NF(\pi)} \\
        &= \sum_{k \ge 0} \left(\tau^2 n\right)^{-k} \cdot \left|\left\{ \pi \in S_n : \NF(\pi) = k \right\}\right| \\
        &= \sum_{k \ge 0} \left(\tau^2 n\right)^{-k} \cdot \binom{n}{k} \cdot D_k \mcom
    \intertext{where $D_k$ is the number of derangements on $[k]$, that is, the number of permutations $\sigma \in S_k$ such that $\sigma(i) \ne i$ for all $i$. Clearly, $D_k \le k!$ for all $k$. Thus, the above can be bounded by}
            &\le \sum_{k \ge 0} \left( \tau^2 n \right)^{-k} \cdot \frac{n!}{(n-k)!} \\
            &\le \sum_{k \ge 0} \tau^{-2k} \\
            &= \frac{\tau^2}{\tau^2 - 1} \mper \qedhere
    \end{align*}
    The above bound combined with \Cref{cor:specrad-bound} implies the desired bound on $\E_{\bM}\rho(\bM)^2$ by choosing $\tau$ as any constant larger than $1$.
    The bound on $\E_{\bM} \abs{ \{ \lambda: \lambda\in\spec(\bM),\, \abs{\lambda} > 1 + \eps \} }$ follows from the above bound combined with \Cref{cor:num-outliers-bound} by choosing $\tau$ as $\sqrt{1+\eps}$, and $\delta = \eps$.
\end{proof}

\begin{remark}[On tightness of bounds]
    \label{rem:girko-tightness}
    Most past study on Girko matrices has been in the setting where the distributions of the entries of the matrix do not depend on $n$. 
    More concretely, they assume some ensemble $(\ba_{ij})_{i,j \ge 1}$ of iid centered unit variance random variables, and set $\bA = \left( \frac{1}{\sqrt{n}} \ba_{ij}\right)_{1 \le i,j \le n}$.
    In particular, it was shown that the empirical spectral distribution of $\bA$ weakly converges to the circular law $\Unif(\bbD)$ \cite{Gir85,Gir18,TVK10}. Later, in \cite{BCG22}, it was proved that in fact, for any $\eps > 0$, $\Pr\left[ |\rho(\bA)-1| > \eps \right] = o(1)$. The latter is significantly stronger than the weak ``Markov-esque'' tails we obtain in \Cref{thm:main-girko}.

    However, such statements are \emph{false} in the generality we assume, where the $\ba_{ij}$ are further allowed to depend on $n$. For example, consider the matrix $\bM$ whose entries $\bM_{ij}$ are independently distributed as
    \[
    \bM_{ij} =
    \begin{cases}
        0 \mcom & \text{w.p. } 1 - 2^{-n} \mcom \\
        \frac{1}{\sqrt{n}} \cdot 2^{n/2} \mcom & \text{w.p. } 2^{-n-1} \mcom \\
        -\frac{1}{\sqrt{n}} \cdot 2^{n/2} \mcom & \text{w.p. } 2^{-n-1} \mper
    \end{cases}
    \]
    Clearly, the entries of $\bM$ are centered and have variance $\frac{1}{n}$. However, with $1-o(1)$ probability, $\bM = 0$, so its empirical spectral distribution in fact converges to $\delta_0$, the Dirac measure on $0$.\\
    At the level of generality we work in, it is also not true that for every $\eps > 0$, $\Pr\left[ \rho(\bA) > 1+\eps \right] = o(1)$.
    For example, consider the appropriately scaled centered adjacency matrix of a draw $G$ from the (directed) \erdos--\renyi~graph ensemble, that is,
    \[ \frac{1}{\sqrt{2}} \cdot \bM_{ij} =
    \begin{cases}
        -\frac{1}{2n} \mcom & \text{w.p. } 1 - \frac{1}{2n} \mcom \\
        1 - \frac{1}{2n} \mcom & \text{w.p. } \frac{1}{2n} \mper
    \end{cases}
    \]
    The entries of this matrix have variance approximately $\frac{1}{n}$.
    It is easy to check that with $\Omega(1)$ probability, $\rho(\bM) \gtrapprox \sqrt{2}$.
    Indeed, with $\Omega(1)$ probability, there is a $3$-cycle $abc$ in $G$, such that $a,b,c$ further have no edges to any other vertices in $G$. In this case, the spectral radius of $\bM$ is bounded from below by the spectral radius of the principal submatrix on $\{a,b,c\}$, which is given by
    \[ \sqrt{2} \begin{pmatrix} 0 & 1 - \frac{1}{2n} & -\frac{1}{2n} \\ -\frac{1}{2n} & 0 & 1 - \frac{1}{2n} \\ 1 - \frac{1}{2n} & -\frac{1}{2n} & 0 \end{pmatrix} \mper \]
    However, for sufficiently large $n$, the spectral radius of this matrix is $\approx \sqrt{2} > 1$, so the spectral radius of $\bM$ is also bounded away from $1$ with constant probability.
\end{remark}

\section{Hermitian matrices and the nonbacktracking matrix} \label{sec:nb-prelims}

In this section, we review the well-understood connection between the spectral radius of a Hermitian matrix and its nonbacktracking matrix, and also establish some properties of nonbacktracking matrices that we use in our proof.

In particular, we use the following fact, which may be proved using the remarkable Ihara--Bass formula \cite{Iha96,Bas92}.

\begin{lemma}[Consequence of {\cite[Theorem 2.2]{BGBK20}}]   \label{lem:ib-consequence}
    Let $M$ be an $n\times n$ Hermitian matrix, and let $B_M$ be its nonbacktracking matrix.
    Then:
    \[
        \rho(M) \le 2 \rho(B_M) + 9\cdot \max_i \norm*{M_i}_2\mper
    \]
\end{lemma}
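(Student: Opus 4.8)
The plan is to invoke \cite[Theorem 2.2]{BGBK20}, an Ihara--Bass-type identity, and run the following deduction. Write $\alpha \coloneqq \max_i \norm*{M_i}_2$. The Ihara--Bass identity rewrites the reciprocal characteristic polynomial $\det(I - z B_M)$ as an explicit elementary factor times $\det H_M(z)$, where $H_M(z)$ is a \emph{deformed} matrix of the shape $H_M(z) = I - z M + z^2(Q - I)$ for a diagonal matrix $Q$ with nonnegative entries $Q_{ii}$ equal (essentially) to $\norm*{M_i}_2^2$, so in particular $0 \le v^* Q v \le \alpha^2$ for every unit vector $v$. Reading off the zeros of both sides, one concludes that $H_M(z)$ is invertible on the open disk of radius $r_0 \coloneqq 1/\max\{1, \rho(B_M)\}$: the ``$\max$ with $1$'' accounts for the zeros of the elementary factor, and it also lets the ``tree part'' of $M$ (on which $B_M$ is nilpotent) be absorbed cleanly.

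The point is that restricting $z$ to the real axis upgrades this invertibility to positive-definiteness. For real $t$ the matrix $H_M(t)$ is Hermitian, equals $I$ at $t = 0$, and is invertible for all $t \in (-r_0, r_0)$, so by continuity of its (real) eigenvalues one has $v^* H_M(t) v > 0$ for every unit $v$ and every $t \in (-r_0, r_0)$. Let $v$ be a unit eigenvector of $M$ with $v^* M v = \lambda_{\max}(M)$ and put $\beta \coloneqq v^* Q v \in [0, \alpha^2]$. Then for every $t \in (0, r_0)$,
\[
    \lambda_{\max}(M) < \frac{1}{t} + t(\beta - 1) \mper
\]
Minimizing over $t \in (0, r_0)$ --- handling $\beta \le 1$ and $\beta > 1$ separately, using $\sqrt{\beta} \le \alpha$, and distinguishing $\rho(B_M) \le 1$ from $\rho(B_M) > 1$ --- yields $\lambda_{\max}(M) \le 2\rho(B_M) + 2\alpha$. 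Running the same argument for $-M$ (note $B_{-M} = -B_M$, and $Q$ is unchanged) bounds $-\lambda_{\min}(M) = \lambda_{\max}(-M)$ in the same way, and since $\rho(M) = \max\{\lambda_{\max}(M), -\lambda_{\min}(M)\}$ for Hermitian $M$, the claimed inequality follows with room to spare.

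The only delicate ingredient is the Ihara--Bass identity itself, in a form valid for a general --- possibly complex, possibly singular --- Hermitian $M$, and with the elementary factor and the deformation $Q$ correctly pinned down; this is exactly what \cite[Theorem 2.2]{BGBK20} provides, and I would invoke it as a black box. Everything else --- the continuity/positive-definiteness observation and the single-variable optimization --- is routine, and the generous constants $2$ and $9$ in the statement are there precisely to absorb the bookkeeping (in particular the replacement of $\rho(B_M)$ by $\max\{1,\rho(B_M)\}$ and any lower-order corrections to $Q$).
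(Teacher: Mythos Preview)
The paper does not actually prove this lemma; it is stated without proof as a direct consequence of \cite[Theorem~2.2]{BGBK20}. Your overall strategy---extract from the Ihara--Bass identity that the Hermitian pencil $H_M(t)=I-tM+t^2(Q-I)$ stays positive definite on a real interval about $0$, then test against an eigenvector of $M$ and optimize in $t$---is exactly the standard route to such inequalities, and is almost certainly what the authors have in mind.

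There is, however, a genuine gap in your choice $r_0=1/\max\{1,\rho(B_M)\}$. You are implicitly modelling the elementary factor on the \emph{unweighted} Ihara--Bass factor $(1-z^2)^{|E|-|V|}$, whose only real zeros sit at $\pm1$. In the weighted Hermitian setting this is not the right picture, and with your stated $H_M$ and $Q_{ii}=\|M_i\|_2^2$ the pencil can lose positive-definiteness strictly inside your claimed interval even when $\rho(B_M)=0$. Concretely, take the weighted star $M_{1j}=M_{j1}=c$ for $j\ge2$ (all other entries zero), say with $n=5$ and $c=0.1$. Then $B_M$ is nilpotent, so $\rho(B_M)=0$ and your $r_0=1$; but for the top eigenvector $v$ of $M$ one computes
\[
v^*H_M(t)v \;=\; 1 \;-\; t\,c\sqrt{n-1}\;+\;t^2\!\left(\tfrac{c^2 n}{2}-1\right)\;=\;1-0.2\,t-0.975\,t^2,
\]
which is negative at $t=0.95$. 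Hence $H_M(t)\not\succ0$ on all of $(-1,1)$, and your optimization there is unjustified.

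The fix is that the correct radius must carry the weights: one needs $r_0=1/\max\{\rho(B_M),\,c'\alpha\}$ (equivalently, $1/\max\{\rho(B_M),\max_{ij}|M_{ij}|\}$, depending on the precise form of the identity in \cite{BGBK20}), not $1/\max\{1,\rho(B_M)\}$. With that correction the optimization in the regime $\rho(B_M)\lesssim\alpha$ yields $\lambda_{\max}(M)\le C\alpha$, and the rest of your argument goes through; this extra $\alpha$-dependent case, not a universal additive $1$, is what produces the constant $9$ rather than $2$ in the stated bound.
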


\begin{remark}
    The translation between the spectral radius and that of its nonbacktracking matrix applies to any matrix and hence is crude, and lossy by a constant factor.
    For many ``nice'' random matrix ensembles, there is essentially no loss incurred in passing to the nonbacktracking matrix.
\end{remark}

\begin{remark}
    While the above statement only translates bounds on the spectral radius of the nonbacktracking matrix to bounds on the spectral radius of $M$, the Ihara--Bass formula may similarly be used to translate bounds on the number of outlier eigenvalues (of the form in \Cref{thm:main-girko}(b)). We omit the details; such bounds immediately follow from all the computations we perform here, when used in conjunction with \Cref{cor:num-outliers-bound}.
\end{remark}

For a random matrix $\bM$ in the wild, it is often easy to control $\max_i\norm*{\bM_i}_2$, and the challenging part is in getting a handle on $\rho(B_{\bM})$.
Today, we will use \Cref{cor:specrad-bound} to control $\rho(B_{\bM})$ by studying $\det(I - B_{\bM}z)$ where $\bM$ is either a non-asymptotic Wigner matrix, or the adjacency matrix of a random regular graph.

We use $\vec{E}$ to denote the set of directed edges on the $n$-vertex complete graph, and $S_{\vec{E}}$ to denote the set of all permutations on $\vec{E}$.
We will use $\NBP_{\vec{E}}\subseteq S_{\vec{E}}$ to refer to the set of all \emph{nonbacktracking permutations}, i.e., permutations $\pi\in S_{\vec{E}}$ such that for any $ij\in\vec{E}$, $\pi(ij)$ is either equal to $ij$, or is equal to $jk$ for some $k\ne i$.
For a permutation $\pi$, we use $\NTCyc(\pi)$ to denote the number of nontrivial cycles in the permutation $\pi$, i.e., cycles of length at least $2$, and we use $\Cyc(\pi)$ to denote the number of cycles in $\pi$ (including trivial ones).

We have:
\begin{align*}
	\det\left( \Id - B_{\bM}z \right) &= \sum_{\pi \in S_{\vec{E}}} \sgn(\pi) \prod_{e \in \NF(\pi)} (-B_{e \pi(e)} z) \\
	&= \sum_{\pi \in \NBP_{\vec{E}}} \sgn(\pi) \cdot \prod_{e \in \NF(\pi)} (-B_{e \pi(e)} z) \\
    &= \sum_{\pi\in\NBP_{\vec{E}}} (-1)^{\NTCyc(\pi)} \cdot z^{|\NF(\pi)|} \cdot \prod_{e\in\NF(\pi)} \bM_{\pi(e)}\mper
\end{align*}
Using $\calH$ to denote the set of all directed subgraphs on $[n]$ (that is, subsets of $\vec{E}$), and for $H\in\calH$, using $\NBP_H$ to denote the set of all $\pi\in\NBP_{\vec{E}}$ such that $\NF(\pi) = H$, we may write the above as:
\[
    \det( \Id - B_{\bM}z) = \sum_{H\in\calH} z^{e(H)} \cdot \prod_{e\in H} \bM_e \cdot \sum_{\pi\in\NBP(H)} (-1)^{\NTCyc(\pi)}  \numberthis \label{eq:det-subgraph-decomp}
\]
We first substantially simplify the sum over elements in $\NBP(H)$.
Towards doing so, let us introduce some terminology.
\begin{definition}  \label{def:nbp-simple-notation}
    For $H\in\calH$, we say that $e\in H$ is a \emph{doubleton} if the reverse of $e$ also occurs in $H$, and we say $e$ is a \emph{singleton} otherwise.
    For a vertex $v\in[n]$, we use $\In_H(v)$ to denote the set of all incoming edges to $v$ in $H$, and $\Out_H(v)$ to denote the set of all outgoing edges.
    Let $\wt{\calH}\subseteq\calH$ be the set of all $H\in\calH$ such that every vertex has at most one incoming singleton edge and at most one outgoing singleton edge.
\end{definition}
A simple yet important observation is the following.
\begin{observation}
    Let $\pi\in\NBP(H)$.
    By the fact that $\pi$ is a nonbacktracking permutation, for every vertex $v$ in $[n]$, $d_v^{H} \coloneqq \abs{\In_H(v)} = \abs{\Out_H(v)}$, and $\pi(\In_H(v)) = \Out_H(v)$.
\end{observation}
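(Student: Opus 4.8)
The plan is to unpack the two defining properties of a $\pi \in \NBP(H)$ and see how they constrain $\pi$ at each vertex. Recall that $\pi$ is a permutation of $\vec{E}$ whose non-fixed set $\NF(\pi)$ is exactly $H$, and that $\pi$ is nonbacktracking. The first property alone already tells us that $\pi$ restricts to a bijection of $H$: since $\pi$ fixes every edge outside $H$ pointwise and is a bijection of $\vec{E}$, it must carry $H$ onto $H$, i.e.\ $\pi(H) = H$. I would record this first.

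Next I would trace what happens to a single incoming edge. Fix a vertex $v$ and take $e = iv \in \In_H(v)$. Since $e \in H$ we have $\pi(e) \ne e$, so the nonbacktracking condition forces $\pi(e)$ to be of the form $vk$ with $k \ne i$ — an edge out of $v$. Combined with $\pi(e) \in \pi(H) = H$, this gives $\pi(e) \in \Out_H(v)$. Hence $\pi(\In_H(v)) \subseteq \Out_H(v)$ for every $v$.

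Finally I would promote this containment to an equality by a partition argument. The families $\{\In_H(v)\}_{v\in[n]}$ and $\{\Out_H(v)\}_{v\in[n]}$ each partition the edge set $H$, since every edge of $H$ has a unique head and a unique tail; and because $\pi$ is injective, $\{\pi(\In_H(v))\}_{v\in[n]}$ is again a partition of $\pi(H) = H$. A partition whose blocks are contained, block by block, in the blocks of another partition of the same set must coincide with it; therefore $\pi(\In_H(v)) = \Out_H(v)$ for every $v$, and in particular $|\In_H(v)| = |\Out_H(v)| =: d_v^H$.

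I do not expect a genuine obstacle here. The one subtlety to respect is that "$\pi$ is nonbacktracking" a priori constrains only $\pi$ and not $\pi^{-1}$, so one should not argue by a naive symmetry between in- and out-edges; the partition argument in the last step is precisely what lets us conclude equality using only the one-sided containment $\pi(\In_H(v)) \subseteq \Out_H(v)$.
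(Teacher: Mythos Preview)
Your argument is correct. The paper states this as an observation without proof, so there is no approach to compare against; your partition argument is a clean way to supply the details, and your caution about only having the one-sided containment $\pi(\In_H(v)) \subseteq \Out_H(v)$ from the nonbacktracking condition is well placed.
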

We prove the following simplification for the determinant.
\begin{lemma}   \label{lem:simple-form-det}
   For any matrix $M$, we have
    \[
        \det(\Id - B_{M}z) = \sum_{H\in\wt{\calH}} z^{e(H)} \cdot \prod_{e\in H} M_e \cdot \sum_{\pi\in T(H)} (-1)^{\NTCyc(\pi)}
    \]
    for some $T(H)\subseteq \NBP(H)$ with $|T(H)| \le \prod_{v\in[n]} d_v^{H}$.
\end{lemma}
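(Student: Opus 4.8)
\emph{Plan.} I start from the decomposition \eqref{eq:det-subgraph-decomp}. The key structural point is that, once $\NF(\pi)=H$ is fixed, the Observation above says a permutation $\pi\in\NBP(H)$ is exactly an independent choice, at each vertex $v$, of a bijection $\sigma_v\colon\In_H(v)\to\Out_H(v)$ subject only to the nonbacktracking rule $\sigma_v(e)\ne\bar e$ (the reverse of $e$)---a constraint that is vacuous unless $e$ is a doubleton. Writing $\mathcal{B}_v$ for the set of these ``locally legal'' bijections, $\NBP(H)=\prod_v\mathcal{B}_v$. Since $\abs{\In_H(v)}=\abs{\Out_H(v)}=d_v^H$ and doubleton in-edges biject with doubleton out-edges at $v$, the number of incoming singletons at $v$ equals the number of outgoing ones; hence for $H\in\wt{\calH}$ every vertex is either \emph{regular} (all incident edges are doubletons) or \emph{singular} (exactly one incoming and one outgoing singleton). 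Also, because $\sigma_v(e)\ne e$ always, $\pi$ has no fixed point inside $H$, so $\NTCyc(\pi)$ is just the number of cycles of $\pi|_H$.

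\emph{Killing $H\notin\wt{\calH}$.} Suppose some vertex has two distinct incoming singletons (the outgoing case is symmetric); fix a canonical such pair $f_1\ne f_2$ (at the lexicographically first offending vertex). Then $\pi\mapsto\pi\circ(f_1\ f_2)$ is a fixed-point-free, sign-reversing involution of $\NBP(H)$: it remains in $\NBP(H)$ because a singleton's reverse is absent from $H$, so the swapped images cannot backtrack, and because $\In_H(v)\cap\Out_H(v)=\emptyset$ no new fixed point is created; it squares to the identity since the offending vertex and pair are determined by $H$ alone; and composing $\pi|_H$ with a transposition changes its number of cycles, hence $\NTCyc(\pi)$, by exactly one. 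Therefore $\sum_{\pi\in\NBP(H)}(-1)^{\NTCyc(\pi)}=0$, and only $H\in\wt{\calH}$ contribute to \eqref{eq:det-subgraph-decomp}.

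\emph{Choosing $R_v$ for $H\in\wt{\calH}$.} It suffices to consider $v$ with $d_v^H\ge1$; the other vertices give singleton factors in $\NBP(H)=\prod_v\mathcal{B}_v$ and may be ignored. Identify each such $\mathcal{B}_v$ with a set of permutations of a $d_v^H$-element set via the canonical pairing $(u,v)\leftrightarrow(v,u)$ of doubletons and the pairing of the (at most one) singleton in-edge with the singleton out-edge; under this identification $\mathcal{B}_v$ is the set of permutations with no fixed point on the doubleton coordinates, and an inclusion--exclusion over those coordinates gives $\sum_{\sigma\in\mathcal{B}_v}\sgn(\sigma)=(-1)^{d_v^H-1}(d_v^H-1)$ when $v$ is regular and $(-1)^{d_v^H-1}$ when $v$ is singular. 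In both cases the absolute value is at most $d_v^H$, so we may choose $R_v\subseteq\mathcal{B}_v$ with $\abs{R_v}\le d_v^H$ and $\sum_{\sigma\in R_v}\sgn(\sigma)=\sum_{\sigma\in\mathcal{B}_v}\sgn(\sigma)$ (take that many elements of the majority sign). Set $T(H)=\prod_v R_v\subseteq\NBP(H)$, so $\abs{T(H)}\le\prod_v d_v^H$.

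\emph{The truncation identity, and the main obstacle.} It remains to prove $\sum_{\pi\in\NBP(H)}(-1)^{\NTCyc(\pi)}=\sum_{\pi\in T(H)}(-1)^{\NTCyc(\pi)}$, which I do by replacing $\mathcal{B}_w$ by $R_w$ one vertex $w$ at a time. Freeze the bijections $\{\sigma_v\}_{v\ne w}$: they glue into an injective partial permutation of $H$ with domain $H\setminus\In_H(w)$ and image $H\setminus\Out_H(w)$, and following it from each edge of $\Out_H(w)$ until it first re-enters $w$ yields a bijection $\gamma_w\colon\Out_H(w)\to\In_H(w)$ not depending on $\sigma_w$. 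The cycles of $\pi|_H$ not meeting $w$ are precisely the periodic orbits of this partial permutation---a constant number $c$---while those meeting $w$ correspond, via their successive in-edges at $w$, to the cycles of $\gamma_w\circ\sigma_w$; hence $\NTCyc(\pi)=c+\Cyc(\gamma_w\circ\sigma_w)$. Using $(-1)^{\Cyc(\rho)}=(-1)^{d_w^H}\sgn(\rho)$ for a permutation $\rho$ of $d_w^H$ elements,
\[
    \sum_{\sigma_w\in\mathcal{B}_w}(-1)^{\NTCyc(\pi)}
    =(-1)^{c+d_w^H}\sgn(\gamma_w)\sum_{\sigma_w\in\mathcal{B}_w}\sgn(\sigma_w)
    =(-1)^{c+d_w^H}\sgn(\gamma_w)\sum_{\sigma_w\in R_w}\sgn(\sigma_w)
    =\sum_{\sigma_w\in R_w}(-1)^{\NTCyc(\pi)},
\]
independently of the frozen data; summing over those frozen choices and iterating over all vertices gives the identity. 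I expect the delicate part to be exactly this last step: checking that the frozen transitions assemble into an injective partial map with the stated domain and image (so $\gamma_w$ is a genuine bijection) and that the cycle count splits as $c+\Cyc(\gamma_w\circ\sigma_w)$. Granting that, the decoupling $(-1)^{\Cyc(\gamma_w\sigma_w)}=(-1)^{d_w^H}\sgn(\gamma_w)\sgn(\sigma_w)$ is precisely what lets the single, frozen-data-independent choice of $R_w$ work simultaneously throughout the induction.
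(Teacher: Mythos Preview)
Your proposal is correct and follows essentially the same approach as the paper: decompose $\NBP(H)$ into a product of local bijections, decouple the sign at a chosen vertex $w$ via the path-following bijection $\gamma_w$ so that the inner sum becomes a constant times $\sum_{\sigma_w}\sgn(\sigma_w)$, truncate $\mathcal{B}_w$ to a subset of size at most $d_w^H$, and iterate over vertices. The only differences are cosmetic---you eliminate $H\notin\wt{\calH}$ by an explicit sign-reversing involution and compute $\sum_{\sigma_v}\sgn(\sigma_v)$ by inclusion--exclusion, whereas the paper handles both at once by recognizing this sum as the permanent-like determinant $\det(R)$ of a $\{0,1\}$-matrix with at most one zero per row/column (vanishing when two rows are all-ones, and equal to $\pm 1$ or $\pm(d_v^H-1)$ otherwise)---but the underlying computation is identical.
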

\begin{proof}
    For $\pi\in\NBP(H)$ and $v\in[n]$, we use $\pi_v:\In_H(v)\to\Out_H(v)$ to refer to the restriction of $\pi$ to the edges incident to $v$.
    We first observe that $\pi$ is uniquely determined by a collection of local bijections $(\pi_v)_{v\in[n]}$.
    In particular, $\pi$ is forced to be the permutation where $\pi(uv) = \pi_v(uv)$.
    We also observe that if each local permutation obeys the nonbacktracking constraint, $\pi$ is in $\NBP(H)$.
    We now set up some notation.
    \begin{itemize}
        \item Let $\Omega_v$ be the collection of all nonbacktracking local bijections from $\In_H(v)$ and $\Out_H(v)$.
        \item Let $\Omega \coloneqq \prod_{v\in[n]}\Omega_v$, and let $\Omega_{-v} = \prod_{u\in[n]\setminus v} \Omega_u$.
        \item Let $\pi_{-v}$ denote $(\pi_u)_{u\in[n]\setminus v}$.
        \item Let $\NTCyc_v(\pi)$ refer to the number of nontrivial cycles in $\pi$ that pass through $v$, and let $\NTCyc_{-v}(\pi)$ refer to the number of cycles in $\pi$ that do not pass through $v$.
    \end{itemize}
    For any $v\in[n]$, we can write:
    \begin{align*}
        \sum_{\pi\in\NBP(H)} (-1)^{\NTCyc(\pi)} &= \sum_{\pi_1,\dots,\pi_n\in\Omega} (-1)^{\NTCyc(\pi)} \\
        &= \sum_{\pi_{-v}\in\Omega_{-v}} \sum_{\pi_v \in \Omega_v} (-1)^{\NTCyc_{-v}(\pi) + \NTCyc_v(\pi)} \\
        &= \sum_{\pi_{-v}\in\Omega_{-v}} (-1)^{\NTCyc_{-v}(\pi_{-v})} \sum_{\pi_v\in\Omega_v} (-1)^{\NTCyc_v(\pi)},
    \end{align*}
    where in the last equality uses the fact that $\NTCyc_{-v}(\pi)$ does not depend on $\pi_v$.
    Observe that one can extract from $\pi_{-v}$ a bijection $\sigma:\Out_H(v)\to\In_H(v)$ where $\sigma(vw)$ is equal to $w'v$ obtained by following the path starting at $vw$ described by $\pi_{-v}$ until it hits $v$ next.
    We may treat $\sigma$ and $\pi_v$ as permutations on $[d_v^{\pi}]$ by arbitrarily labeling the elements of $\Out_H(v)$ and $\In_H(v)$ with elements of $[d_v^{\pi}]$.
    Next, observe that $\NTCyc_v(\pi) = \Cyc(\sigma\circ\pi_v)$, and so,
    \[
        (-1)^{\NTCyc_v(\pi)} = (-1)^{\Cyc(\sigma\circ\pi_v)} = (-1)^{d_v^{\pi}} \cdot \sgn(\sigma\circ\pi_v) = (-1)^{d_v^{\pi}} \cdot \sgn(\sigma) \cdot \sgn(\pi_v)\mper
    \]
    Since $\sigma$ depends only on $\pi_{-v}$, we get:
    \[
        \sum_{\pi\in\NBP(H)} (-1)^{\NTCyc(\pi)} = \sum_{\pi_{-v}\in\Omega_{-v}} (-1)^{\NTCyc_{-v}(\pi_{-v})}\cdot \sgn(\sigma) \cdot (-1)^{d_v^{\pi}} \cdot \sum_{\pi_v\in\Omega_v} \sgn(\pi_v)\mper \numberthis \label{eq:disentangled-Omega_v}
    \]
    The innermost sum is taken over all permutations subject to the constraint that $\pi_v$ cannot map an in-edge to the out-edge that reverses it.
    Thus, for the $d_v^{\pi}\times d_v^{\pi}$ matrix $R$ where $R_{ij}$ is $0$ if the $i$-th in-edge is the reversal of the $j$-th outedge, and place $1$ otherwise, we have that
    \[
        \sum_{\pi_v\in\Omega_v} \sgn(\pi_v) = \det(R)\mper
    \]
    Observe that $R$ is a $\{0,1\}$-matrix where every row and column has at most a single $0$, and so we can understand its determinant based on the following cases:
    \begin{itemize}
        \item {\it Number of rows that are all-ones is at least $2$.} In this case, $\det(R) = 0$ since the matrix is singular by virtue of having repeated rows.
        \item {\it Number of rows that are all-ones is exactly $1$.} In this case, $\det(R) \in \{ \pm \det(11^{\top} - \Id + E_{11})\}$, which is equal to $\{\pm 1\}$ via explicit calculation of eigenvalues.
        \item {\it Every row/column has a $0$.}  In this case, $\det(R) \in \{\pm\det(11^{\top}-\Id)\}$, which is equal to $\{\pm(d_v^{\pi}-1)\}$ via explicit calculation of eigenvalues.
    \end{itemize}
    The above casework has a few upshots.
    First, if $H$ is not in $\wt{\calH}$, then we choose $v$ to be a vertex with either more than one incoming singleton or outgoing singleton edge.
    For such a vertex, we must have $ \sum_{\pi_v\in\Omega_v} \sgn(\pi_v) = 0$, and so the entire term corresponding to $H$ in \Cref{eq:det-subgraph-decomp} vanishes.
    Thus, we may assume $H$ is in $\wt{\calH}$.
    Since $\abs{\sum_{\pi_v\in\Omega_v} \sgn(\pi_v)} \le d_v^{\pi}$, and each term in the summand is a sign, we may fix a subset $T_v\subseteq\Omega_v$ of at most $d_v^{\pi}$ elements such that $\sum_{\pi_v\in T_v} \sgn(\pi_v) = \sum_{\pi_v \in \Omega_v} \sgn(\pi_v)$.
    As a consequence, we have:
    \[
        \sum_{\pi\in\NBP(H)} (-1)^{\NTCyc(\pi)} = \sum_{\pi_{-v}\in\Omega_{-v}} \sum_{\pi_v\in T_v} (-1)^{\NTCyc(\pi)}\mper
    \]
    We may iteratively replace each $\Omega_v$ with $T_v$ and obtain:
    \[
        \sum_{\pi\in\NBP(H)} (-1)^{\NTCyc(\pi)} = \sum_{\pi_1,\dots,\pi_n\in T_1\times\dots\times T_n} (-1)^{\NTCyc(\pi)}\mper
    \]
    The desired statement follows from choosing $T(H) = T_1\times \dots \times T_n$, and our bound on all $|T_v|$.
\end{proof}


\section{Spectral radius bound for the nonbacktracking matrix}  \label{sec:Wigner}

In this section, we prove that for a random matrix $\bM$ satisfying fairly reasonable bounds on mixed moments in its entries, we can obtain a spectral radius bound on $B_{\bM}$ via \Cref{cor:specrad-bound}.
Our bounds on the spectral radius of non-asymptotic Wigner matrices and random regular graphs will then immediately follow from verifying that they satisfy the requisite condition on their mixed moments.

For the sequel we will need the following notation.
\begin{definition}[Notation for undirecting a directed graph]
    Given a directed graph $H$, we use $\UD(H)$ to denote the undirected (multi-)graph obtained by taking each directed edge $uv$ in $H$, and replacing it with an undirected edge between $u$ and $v$.
    We use $\SUD(H)$ to denote the \emph{simple} undirected graph obtained by taking $\UD(H)$ and replacing each multi-edge with a single edge.
\end{definition}

\begin{assumption}  \label{ass:mixed-moment-bound}
    For every directed subgraph $H\in\wt{\calH}$ (for $\wt{\calH}$ defined in \Cref{def:nbp-simple-notation}), and every assignment of ``multiplicities'' $m:E(\SUD(H))\to\{1,2,3,4\}$ to the edges of $\SUD(H)$, we have:
    \[
        \E_{\bM} \prod_{ij\in E(\SUD(H))} \bM_{ij}^{m(ij)} \le \left(\frac{C}{n}\right)^{|E(H)|}
    \]
    for an absolute constant $C > 1$ independent of $n$.
\end{assumption}

\begin{theorem} \label{thm:moment-assumption-to-spec-norm}
    There exists an absolute constant $\alpha > 0$ such that for any $n\times n$ random matrix $\bM$ satisfying \Cref{ass:mixed-moment-bound}, we have:
    \[
        \E_{\bM} \rho(\bM)^2 \le \alpha\mper
    \]
\end{theorem}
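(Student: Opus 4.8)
By \Cref{lem:ib-consequence}, $\rho(\bM)$ is controlled by $\rho(B_{\bM})$ together with $\max_i\norm*{\bM_i}_2$, so the plan is to bound $\E_{\bM}\rho(B_{\bM})^2$ via \Cref{cor:specrad-bound} applied to $B_{\bM}$ (the $\max_i\norm*{\bM_i}_2$ contribution is handled by an easier special case of the argument below, restricted to star subgraphs). Concretely, fixing $\tau$ to be a large absolute constant chosen at the end, it suffices to show
\[
    \E_{\btheta\sim[0,2\pi]}\E_{\bM}\abs*{\det\parens*{\Id - \tfrac{e^{\im\btheta}}{\tau}B_{\bM}}}^2 = O(1)\mper
\]

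Next I would substitute the expansion of \Cref{lem:simple-form-det}, namely $\det(\Id - B_{\bM}z) = \sum_{H\in\wt{\calH}} z^{e(H)}\bigl(\prod_{e\in H}\bM_e\bigr)s(H)$, where $s(H)\coloneqq\sum_{\pi\in T(H)}(-1)^{\NTCyc(\pi)}$ is \emph{deterministic}---it depends only on $H$, by the proof of \Cref{lem:simple-form-det}---with $\abs{s(H)}\le\prod_v d_v^{H}$. Expanding the squared modulus gives a double sum over pairs $(H_1,H_2)\in\wt{\calH}^2$; using that $\bM$ is Hermitian, so $\overline{\bM_{ij}}=\bM_{ji}$, and averaging over $\btheta$ with $\abs{z}=1/\tau$, orthogonality of $\{e^{\im m\btheta}\}_m$ kills every term with $e(H_1)\ne e(H_2)$ and leaves a factor $\tau^{-e(H_1)-e(H_2)}$. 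Finally, taking $\E_{\bM}$: since the entries are independent and mean zero, $\E_{\bM}\bigl[\prod_{e\in H_1}\bM_e\prod_{e\in H_2}\overline{\bM_e}\bigr]$ factorizes over the edges of the simple graph underlying $\UD(H_1)\cup\UD(H_2)$ and \emph{vanishes unless every such edge is covered at least twice} by the multiset $H_1\uplus H_2$; when it does not vanish, \Cref{ass:mixed-moment-bound}---with Cauchy--Schwarz and $\E\abs{\bM_{ij}}^3\le(\E\abs{\bM_{ij}}^2\,\E\abs{\bM_{ij}}^4)^{1/2}$ to absorb the conjugates into genuine moments of $\abs{\bM_{ij}}$---bounds it in absolute value by $(C/n)^{p}$, where $p=p(H_1,H_2)$ is the number of distinct undirected edges of the union. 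Thus the quantity of interest is at most
\[
    \sum_{L\ge 0}\tau^{-L}\sum_{\substack{(H_1,H_2):\ s(H_1),s(H_2)\ne 0\\ e(H_1)=e(H_2)=L/2\\ \text{every edge of the union doubly covered}}}\Bigl(\prod_v d_v^{H_1}\Bigr)\Bigl(\prod_v d_v^{H_2}\Bigr)\Bigl(\tfrac{C}{n}\Bigr)^{p(H_1,H_2)}\mper
\]

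The heart of the argument, and the step I expect to be the main obstacle, is bounding the inner sum by $K^{L}$ for an absolute constant $K$. Three structural facts drive this. First, double coverage forces $p\le L/2$, since the union carries exactly $L$ undirected edge-slots and each distinct edge absorbs at least two of them. Second, whenever $s(H_i)\ne 0$, the casework in the proof of \Cref{lem:simple-form-det} (nonsingularity of the matrix ``$R_v$'' at every vertex $v$) forces every vertex of the simple graph underlying $\UD(H_i)$ to have degree at least two---a vertex with $d_v=1$ would need its unique in-edge and out-edge to be non-reversals, hence to reach two distinct neighbours, while a vertex with $d_v\ge 2$ has at least two distinct in-neighbours---hence every vertex of the union has degree at least two, giving $v\le p$ for the number $v$ of distinct vertices. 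Consequently the number $\le n^{v}$ of embeddings of the vertex set into $[n]$ is dominated by the $n^{-p}$ decay: $n^{v}(C/n)^{p}\le C^{p}\le C^{L/2}$. It then remains to bound, for fixed $L$, the number of \emph{shapes}---isomorphism types of the pair of Eulerian directed multigraphs together with the data recording $T(H_1),T(H_2)$---by $K_0^{L}$; this is a trace-moment-method style enumeration in which one builds the union edge-by-edge along its nonbacktracking closed walks, each directed edge being used at most four times so that the number of choices per step is $O(1)$, while the leftover factor $\prod_v d_v^{H_1}\prod_v d_v^{H_2}\le e^{L/e}\le 2^{L}$ follows from $\sum_v d_v^{H_i}=e(H_i)$ and AM--GM. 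Combining, the contribution of each $L$ is at most $\tau^{-L}K_0^{L}2^{L}C^{L/2}=(2K_0\sqrt{C}/\tau)^{L}$, so choosing $\tau>2K_0\sqrt{C}$ makes the geometric series over $L$ converge to an absolute constant and yields $\E_{\bM}\rho(B_{\bM})^2\le\tau^2\cdot O(1)=\alpha$.

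Two points I would need to nail down but not grind through here: the precise way \Cref{ass:mixed-moment-bound} is invoked, given that the squared modulus produces conjugated monomials while the assumption is stated for unconjugated ones (routine, via independence of the entries and Cauchy--Schwarz); and an honest accounting of the ``shape'' count, which must come out to $K_0^{L}$ rather than a superexponential $L^{O(L)}$---this is exactly where one exploits that $H_1,H_2$ are unions of nonbacktracking closed walks with bounded edge multiplicities rather than arbitrary elements of $\wt{\calH}$.
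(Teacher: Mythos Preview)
The proposal has a genuine gap. You write ``since the entries are independent and mean zero'' to conclude that $\E_{\bM}\bigl[\prod_{e\in H_1}\bM_e\prod_{e\in H_2}\overline{\bM_e}\bigr]$ factorizes over undirected edges and vanishes unless every such edge is covered at least twice. But \Cref{ass:mixed-moment-bound} asserts neither independence nor mean zero: it is purely a bound on mixed moments with multiplicities $m\in\{1,2,3,4\}$, and explicitly allows $m=1$. The entire point of phrasing \Cref{thm:moment-assumption-to-spec-norm} under this abstract assumption, rather than directly for Wigner matrices, is to cover the random $d$-regular ensemble treated in \Cref{sec:mixed-moments-dreg}, whose centered adjacency entries are \emph{not} independent and whose singleton monomials do \emph{not} vanish. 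The double-coverage restriction---and with it the bound $p\le L/2$ that your counting leans on---is therefore unjustified in the stated generality.

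The paper's proof avoids this entirely by never invoking vanishing. It applies \Cref{ass:mixed-moment-bound} directly to every pair $(H,H')\in\wt{\calH}^2$, obtaining $(C/n)^{e(\SUD(H\cup H'))}$ unconditionally, and uses $\tau^{-e(H)-e(H')}\le\tau^{-e(\SUD(H\cup H'))}$ to combine the two factors into $(C/(\tau n))^{k}$ with $k=e(\SUD(H\cup H'))$. The sum over $(H,H')$ is then organized by first fixing the simple graph $R=\SUD(H\cup H')$---with at most $\binom{n}{r}\binom{r^2/2}{k}$ choices, where $r\le k$ because every non-isolated vertex of $R$ has degree at least two (your argument for this is correct)---and then noting that there are at most $16^{k}$ pairs $(H,H')$ yielding a given $R$, since each undirected edge of $R$ records which subset of $\{ij\in H,\,ji\in H,\,ij\in H',\,ji\in H'\}$ holds. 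Together with $|T(H)|\,|T(H')|\le\prod_v\deg_R(v)^2\le O(1)^k$, this gives a clean $O(1)^k/\tau^k$ per level, summable for $\tau$ large. This is more concrete than the ``trace-moment-style shape enumeration'' you flag as the main obstacle, and it works without any restriction to doubly-covered pairs. Two side remarks: the theorem, as proved in the paper, is really a bound on $\rho(B_{\bM})$ rather than $\rho(\bM)$---the row-norm term $\max_i\norm*{\bM_i}_2$ is not controlled by \Cref{ass:mixed-moment-bound} alone but handled separately for each ensemble---and the orthogonality over $\btheta$ you use (forcing $e(H_1)=e(H_2)$) is harmless but unnecessary, since the paper bounds the determinant pointwise for every $z$ on the circle.
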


\begin{remark}
    \Cref{thm:main-Wigner} follows immediately from \Cref{thm:moment-assumption-to-spec-norm} since a non-asymptotic Wigner matrix can be readily verified to satisfy \Cref{ass:mixed-moment-bound} using independence.
    Verifying \Cref{ass:mixed-moment-bound} for the random $d$-regular ensemble is a more challenging task, and we do so in \Cref{sec:mixed-moments-dreg}.
\end{remark}

We now prove \Cref{thm:moment-assumption-to-spec-norm} below.
\begin{proof}[Proof of \Cref{thm:moment-assumption-to-spec-norm}]
    By \Cref{cor:specrad-bound}, it suffices to prove an upper bound on
    \[
        \E_{\bM} \abs*{ \det(\Id - z \cdot B_{\bM}) }^2
    \]
    for $|z| = 1/\tau$ for some sufficiently large constant $\tau$.
    We apply \Cref{lem:simple-form-det} and obtain:
    \begin{align}
        \E_{\bM} \abs*{ \det(\Id - z \cdot B_{\bM}) }^2 &=
        \E_{\bM} \det(\Id - z \cdot B_{\bM}) \cdot \det(\Id - z^* \cdot B_{\bM^*}) \nonumber \\
        &\le \sum_{H,H'\in\wt{\calH}} \tau^{-e(H)-e(H')} \cdot \abs{\E_{\bM} \prod_{e\in \vec{E}(H)} \bM_e \cdot \prod_{e\in\vec{E}(H')} \bM_e} \cdot \abs{\sum_{\substack{\pi\in T(H) \\ \pi'\in T(H')}} (-1)^{\NTCyc(\pi) + \NTCyc(\pi')}} \nonumber \\
        &\le \sum_{H,H'\in\wt{\calH}} \parens*{\frac{C}{\tau n}}^{e(\SUD(H\cup H'))} \cdot |T(H)| \cdot |T(H')| \nonumber \\
        \intertext{where the third inequality used \Cref{ass:mixed-moment-bound}. \nonumber
        We may continue the above chain of inequalities below.}
        &\le \sum_{k\ge 0} \sum_{\substack{H,H'\in\wt{\calH} \\ e(\SUD(H,H')) = k}} \parens*{ \frac{C}{\tau n} }^k \cdot |T(H)| \cdot |T(H')| \tag{$\star$} \label{eq:subgraph-count-reduction}
    \end{align}
    First, observe that for any $H,H'\in\wt{\calH}$, all non-isolated vertices of the graph $R\coloneqq\SUD(H,H')$ have degree at least $2$, and thus $e(R)\ge v(R)$.
    We relax the above sum to enumerate over \emph{all} $k$-edge graphs $R$ where all non-isolated vertices of $R$ have degree at least $2$, and over \emph{all} pairs of directed graphs $H,H'$ such that $\SUD(H\cup H') = R$.
    For every edge $\{u,v\}\in E(R)$, given a subset of the statements $\{uv\in H, vu\in H, uv\in H', vu\in H'\}$, it is possible to recover $H$ and $H'$.
    Thus, given $R$, there are at most $16^k$ choices for $H$ and $H'$.
    Next, we use the bound on $|T(H)|$ and $|T(H')|$ from \Cref{lem:simple-form-det} to observe that:
    $$
        |T(H)| \cdot |T(H')| \le \prod_{v\in V(R)} \deg_R(v)^2 \le \parens*{\frac{\sum_v \deg_R(v)}{r}}^{2r} = \parens*{ \frac{2k}{r} }^{2r} \le O(1)^k \mcom
    $$
    since $(2k/r)^{r/2k}$ is uniformly bounded.
    Consequently, using $\calR(r,k)$ to denote the set of all $k$-edge subgraphs of the complete graph with $r$ nonisolated vertices, we may write
    \begin{align*}
        \text{\eqref{eq:subgraph-count-reduction}} &\le \sum_{r\ge 0} \sum_{k\ge r}\, \sum_{R\in\calR(r,k)} \parens*{ \frac{O(1)}{\tau n} }^k \\
        &\le \sum_{r\ge 0} \sum_{k\ge r} |\calR(r,k)| \parens*{\frac{O(1)}{\tau n}}^k \\
        &\le \sum_{k\ge 0} \sum_{0\le r\le \min\{k,n\}} {n\choose r} {r^2/2 \choose k} \parens*{ \frac{O(1)}{\tau n} }^k \\
        &\le \sum_{k\ge 0} \tau^{-k} \cdot O(1)^k \cdot \sum_{0\le r \le \min\{k,n\}} \parens*{ \frac{en}{r} }^r \cdot \parens*{ \frac{er^2}{2k} }^k \cdot \frac{1}{n^k} \\
        &\le \sum_{k\ge 0} \tau^{-k} \cdot O(1)^k \cdot \sum_{0\le r\le \min\{k,n\}} \parens*{\frac{r}{n}}^{k-r} \\
        &\le \sum_{k\ge 0} \tau^{-k} \cdot O(1)^k\mcom
    \end{align*}
    where the third inequality used that one can construct an $r$-vertex $k$-edge graph by picking $r$ vertices and then choosing any subset of $k$ edges, and the second last inequality used $r \le k$.
    Choosing $\tau$ as a sufficiently large constant so as to induce geometric decay in the sum completes the proof.
\end{proof}

\section{On mixed moments of entries of random regular graphs}	\label{sec:mixed-moments-dreg}



In this section, we shall prove \Cref{ass:mixed-moment-bound} in the setting where $\bM$ is the normalized and centered adjacency matrix of a random $d$-regular graph. Since we are okay losing constant factors in the spectral radius bound, we may assume that $d$ is some sufficiently large constant.
The specific model of random $d$-regular graphs we study is the configuration model.

\begin{definition}[Configuration model]
	Given $n \ge 1$ and $d \ge 2$, a draw $\bG$ from the corresponding configuration model is defined as follows. Let $\calG$ be a uniformly random perfect matching on $[n] \times [d]$, and let $\bG$ be the multigraph on $[n]$ obtained by adding an edge $ij$ for each edge in $\calG$ between the corresponding ``clouds'' $\{i\} \times [d]$ and $\{j\} \times [d]$.
	Let $\bA$ be the adjacency matrix of $\bG$, and $\bM$ the random matrix defined by
	\[ \bM_{ij} =
	\begin{cases}
		\frac{1}{\sqrt{d}} \cdot \left(\bA_{ij} - \frac{d}{n}\right) \mcom & i \ne j \mcom \\
		0 \mcom & i = j \mper
	\end{cases}
	\]
\end{definition}

\begin{theorem}
	\label{th:ass-moment-ass-dreg-ass}
	\Cref{ass:mixed-moment-bound} holds for $\bM$ defined as above.
\end{theorem}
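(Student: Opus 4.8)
The plan is to reduce everything to one standard fact about the configuration model, a bound on subgraph‑containment probabilities: for every simple graph $S$ on the vertex set $[n]$,
\[
    \Pr\big[\bG \supseteq S\big] \;\le\; \Big(\frac{C_1 d}{n}\Big)^{e(S)}
\]
for an absolute constant $C_1$, where $\bG \supseteq S$ denotes the event that every edge of $S$ appears in $\bG$. Granting this, fix $H \in \wt{\calH}$, set $R = \SUD(H)$ and $\ell = e(R)$, and let $(m_e)_{e \in E(R)}$ with $m_e \in \{1,2,3,4\}$ be the prescribed multiplicities. Since $\bM_{ij} = d^{-1/2}(\bA_{ij} - \tfrac dn) = \bM_{ji}$ and $d^{-\sum_e m_e / 2} \le 1$, it suffices to prove
\[
    \abs*{\E_{\bM} \prod_{e \in E(R)} \parens*{\bA_e - \tfrac{d}{n}}^{m_e}} \;\le\; \Big(\frac{C_2 d}{n}\Big)^{\ell}
\]
for a constant $C_2 = C_2(d)$, and then take $C = C_2 d$.

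The key step is a telescoping decomposition of each factor that keeps visible the cancellation built into the centering $\bA_e - \tfrac dn$: expanding $\parens*{\bA_e - \tfrac dn}^{m_e}$ naively into monomials in $\bA_e$ and $\tfrac dn$ and bounding the resulting joint moments $\E \prod_e \bA_e^{k_e}$ term by term discards this cancellation and leaves combinatorial factors that are super‑exponential in $\ell$. Instead, for $e = ij \in E(R)$ I would write
\[
    \parens*{\bA_e - \tfrac{d}{n}}^{m_e} \;=\; \mathbf{U}_e + \beta_e, \qquad \beta_e \coloneqq \parens*{-\tfrac dn}^{m_e}, \quad \mathbf{U}_e \coloneqq \parens*{\bA_e - \tfrac{d}{n}}^{m_e} - \beta_e .
\]
This choice has two useful features: $\mathbf{U}_e = 0$ whenever $\bA_e = 0$, and since $0 \le \bA_e \le d$ we have $\abs{\mathbf{U}_e} \le d^{m_e} + 1 \le d^4 + 1 \eqqcolon K$ deterministically, so $\abs{\mathbf{U}_e} \le K \cdot \mathbf{1}\{ij \in \bG\}$ pointwise; and $\abs{\beta_e} = (d/n)^{m_e} \le d/n$ since $m_e \ge 1$. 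Expanding the product over $E(R)$, pulling the deterministic $\beta_e$ out of the expectation, and estimating term by term gives
\begin{align*}
    \abs*{\E_{\bM} \prod_{e \in E(R)} \parens*{\bA_e - \tfrac{d}{n}}^{m_e}}
    &\le \sum_{S \subseteq E(R)} \abs*{\prod_{e \notin S} \beta_e} \cdot \abs*{\E_{\bM}\prod_{e \in S} \mathbf{U}_e}
    \;\le\; \sum_{S \subseteq E(R)} \Big(\frac{d}{n}\Big)^{\ell - \abs{S}} K^{\abs{S}}\, \Pr\big[\bG \supseteq S\big] \\
    &\le \sum_{S \subseteq E(R)} \Big(\frac{d}{n}\Big)^{\ell - \abs{S}} \Big(\frac{K C_1 d}{n}\Big)^{\abs{S}}
    \;=\; \Big(\frac{d}{n}\Big)^{\ell} (1 + K C_1)^{\ell},
\end{align*}
where the passage to the second line uses $\abs{\prod_{e \in S}\mathbf{U}_e} \le K^{\abs{S}}\prod_{e \in S}\mathbf{1}\{e \in \bG\}$ together with $\E[\prod_{e\in S}\mathbf{1}\{e\in\bG\}] = \Pr[\bG \supseteq S]$, and the third inequality is the subgraph‑containment bound. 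This is exactly the required estimate with $C_2 = 1 + K C_1$.

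Finally I would establish the subgraph‑containment bound, which I expect to be the main obstacle. If some vertex $v$ has $\deg_S(v) > d$ then $\Pr[\bG \supseteq S] = 0$, so assume $\deg_S(v) \le d$ everywhere, whence $s \coloneqq e(S) \le nd/2$. Realizing $S$ inside the configuration amounts to choosing, at each vertex $v$, an injection of its $\deg_S(v)$ incident $S$‑edges into its $d$ half‑edges; this produces a partial matching of size $s$ on $[n] \times [d]$, and a union bound over realizations gives
\[
    \Pr\big[\bG \supseteq S\big] \;\le\; \Big(\prod_{v}\frac{d!}{(d - \deg_S(v))!}\Big) \cdot \frac{1}{(nd-1)(nd-3)\cdots(nd-2s+1)} ,
\]
since a fixed size‑$s$ partial matching lies in a uniformly random perfect matching of $[n] \times [d]$ with probability $((nd-1)(nd-3)\cdots(nd-2s+1))^{-1}$. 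When $s$ is at most a small constant fraction of $nd$, every one of these $s$ denominator factors exceeds $nd/2$, and with the crude count $\prod_v \frac{d!}{(d-\deg_S(v))!} \le d^{2s}$ this already yields $\Pr[\bG \supseteq S] \le (2d/n)^s$. The delicate regime is when $s$ is within a constant factor of $nd/2$: there the trailing denominator factors degenerate, and one must instead combine the sharper count $\prod_v \frac{d!}{(d - \deg_S(v))!}$ — which is much smaller than $d^{2s}$ exactly when many degrees approach $d$ — with a Stirling estimate for $(nd-1)(nd-3)\cdots(nd-2s+1) = (nd-1)!!/(nd-2s-1)!!$. This case is routine but carries essentially all of the bookkeeping; the rest of the argument is a direct computation.
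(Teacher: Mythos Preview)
Your argument is correct as far as it goes, but it proves a strictly weaker statement than what is needed: the constant you obtain is $C = C_2 d$ with $C_2 = 1 + K C_1$ and $K = d^4 + 1$, so $C$ is polynomial in $d$. The whole point of \Cref{th:ass-moment-ass-dreg-ass}, and the reason the paper spends all of \Cref{sec:mixed-moments-dreg} on it, is to establish \Cref{ass:mixed-moment-bound} with an \emph{absolute} constant $C$ independent of $d$. This is what feeds into \Cref{thm:moment-assumption-to-spec-norm} to give $\rho(B_{\bM}) \le O(1)$ and hence, via \Cref{lem:ib-consequence}, $\lambda_2(A_{\bG}) \le C(\eps)\sqrt{d-1}$ in \Cref{thm:main-dreg}. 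With a $d$-dependent $C$ the conclusion degrades to $\lambda_2(A_{\bG}) \le \mathrm{poly}(d)$, which is worse than the trivial bound $\lambda_2 \le d$.

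The missing idea is precisely the one your decomposition throws away. Writing $(\bA_e - d/n)^{m_e} = \mathbf U_e + \beta_e$ with $\mathbf U_e$ supported on $\{e \in \bG\}$ destroys the cancellation in the centering: for $m_e = 1$ you have $\mathbf U_e = \bA_e \ge 0$, so you are left only with the crude first-moment bound $\E|\mathbf U_e| \lesssim d/n$, whereas the true centered expectation $\E(\bA_e - d/n)$ is $O(1/n^2)$. Even sharpening $K$ from $d^4$ down to $O(1)$ would still leave you with $(O(d)/n)^\ell$, and after the $d^{-\sum m_e/2}$ normalization you get at best $(O(\sqrt d)/n)^\ell$ on singleton edges, not $(O(1)/n)^\ell$. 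The paper recovers the missing $d^{-1/2}$ per singleton edge by computing the centered product over a matching of singletons exactly, via Gaussian-moment identities and Laplace's method (\Cref{lem:dreg-laplace}), and then carefully splicing this into the full product (\Cref{lem:remove-boring-edges}); it is this balance, $d^{1/2}$ per singleton from the moment bound against $d^{-1/2}$ from the normalization, that makes the final constant $d$-free. Your subgraph-containment bound is essentially the paper's \Cref{lem:trivial-dreg-bound}, which alone does not suffice.
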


\begin{proof}[Proof of \Cref{thm:main-dreg}]
	\Cref{th:ass-moment-ass-dreg-ass,thm:moment-assumption-to-spec-norm} yield a constant-probability bound on the spectral radius of the nonbacktracking matrix $B_{\bM}$ associated to $\bM$ of an absolute constant $C$.
	It remains to prove that the maximum row norm of $\bM$ is bounded by $O(1)$ with high probability; this would translate back to a bound of $O(1)$ on the largest eigenvalue of $\bM$ via \Cref{lem:ib-consequence}.

	First, observe that $\norm*{\bM_i} \le \frac{\norm*{(A_{\bG})_i}}{\sqrt{d}} $, and so it suffices to prove $\norm{(A_{\bG})_i} \le O(\sqrt{d})$ with probability at least $1-o(1/n)$.
	We split into cases: when $d \le \sqrt{n}$, the bound of the maximum row norm follows from the fact that $A_{\bG}$ has at most $d$ nonzero entries per row, and that all entries of $A_{\bG}$ are at most $5$ with probability $1-o(1)$.

	We now treat the case when $n \ge d > \sqrt{n}$.
	The random variable $\bx$ describing a row of $A_{\bG}$ can be modeled as the histogram resulting from tossing $d$ balls into $n$ bins.
	We can couple $\bx$ with a random variable $\by$ obtained by tossing $\Poi(2d)$ balls into $n$ bins (or equivalently a vector of $n$ independent draws from $\Poi(2d/n)$) such that $\by \ge \bx$ with probability $1-o(1/n)$.
	One may prove a bound on $\norm*{\by}$ of $O(\sqrt{d})$ that holds with probability $1-o(1/n)$ by proving $\E(\norm*{\by}^2 - \E\norm*{\by}^2)^k \le k! \cdot d^{k/2}$; we omit the details.
\end{proof}

Key to the proof will be establishing concrete wins that can be gained when many of the multiplicities $m(ij)$ are equal to $1$---indeed, in the Wigner setting from the previous section, any monomial with a variable that appears with degree-$1$ (henceforth referred to as a \emph{singleton} edges) is zero in expectation.

Our proof relies on estimates for various centered (and uncentered) expectations in the configuration model, starting with a relatively trivial bound that does \emph{not} see any of the extra wins from singleton edges. In the next few lemmas, one must think of $N = nd$.

\begin{lemma}
	\label{lem:trivial-dreg-bound}
	Let $S$ be an arbitrary subgraph on vertex set $[N]$. Let $m_e \ge 1$ be some constant independent of $N$ for each $e \in S$. Then, for $\calG_{[N]}$ a uniformly random perfect matching on $[N]$, we have
	\[ \left| \E \prod_{e \in S} \left( \bone_{e \in \calG_{[N]}} - \frac{1}{N} \right)^{m_e} \right| \le O\left(\frac{1}{N}\right)^{|S|} \mper \]
\end{lemma}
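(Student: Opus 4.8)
The plan is to compute the expectation exactly by conditioning on which edges of $S$ land in the matching $\calG_{[N]}$, and then bound the resulting combinatorial sum. First I would note that for each edge $e$, the factor $\bigl(\bone_{e\in\calG_{[N]}} - \tfrac1N\bigr)^{m_e}$ takes only two values: $\bigl(1-\tfrac1N\bigr)^{m_e}$ when $e\in\calG_{[N]}$ and $\bigl(-\tfrac1N\bigr)^{m_e}$ otherwise. Since $m_e\ge 1$, both are $O(1/N)$ in the second case and $O(1)$ in the first, but crucially the first case is rare. So I would expand the product over $e\in S$ as a sum over subsets $A\subseteq S$ of the edges that are present: the term for $A$ is $\prod_{e\in A}(1-\tfrac1N)^{m_e}\cdot\prod_{e\notin A}(-\tfrac1N)^{m_e}\cdot\Pr[A\subseteq\calG_{[N]}]$ — wait, more precisely, after expanding we get $\sum_{A\subseteq S}\bigl(\prod_{e\in A}(1-\tfrac1N)^{m_e}\bigr)\bigl(\prod_{e\in S\setminus A}(-1/N)^{m_e}\bigr)\Pr[\text{exactly the edges in }A\text{ lie in }\calG_{[N]}]$, but it is cleaner to not insist on "exactly" and instead write the expectation as $\sum_{A\subseteq S}\Pr[A\subseteq \calG_{[N]}]\cdot(\text{something})$; in any case the key quantity is $\Pr[A\subseteq\calG_{[N]}]$, which is at most $\bigl(\tfrac1{N-1}\bigr)^{|A|}\le O(1/N)^{|A|}$ whenever $A$ is a partial matching, and is $0$ if $A$ is not a partial matching (two edges of $A$ sharing a vertex).

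The key estimate is therefore: the term indexed by $A$ contributes at most $O(1/N)^{|A|}\cdot O(1/N)^{|S\setminus A|} = O(1/N)^{|S|}$, using $m_e\ge1$ to get the $(1/N)^{m_e}\le O(1/N)$ bound on the absent edges and $|(1-1/N)^{m_e}|\le 1$ on the present ones. Summing over the at most $2^{|S|}$ subsets $A$ gives $|{\cdot}|\le 2^{|S|}\cdot O(1/N)^{|S|} = O(1/N)^{|S|}$, absorbing the $2^{|S|}$ into the $O(\cdot)$ constant (this is fine since $|S|$ appears in the exponent on both sides). I should be a little careful about the combinatorial identity being invoked: writing $X_e = \bone_{e\in\calG_{[N]}}$, we have $(X_e - 1/N)^{m_e} = X_e(1-1/N)^{m_e} + (1-X_e)(-1/N)^{m_e}$ since $X_e\in\{0,1\}$, so $\prod_e(X_e-1/N)^{m_e} = \sum_{A\subseteq S}\bigl(\prod_{e\in A}X_e\bigr)\bigl(\prod_{e\in S\setminus A}(1-X_e)\bigr)\cdot\prod_{e\in A}(1-1/N)^{m_e}\cdot\prod_{e\in S\setminus A}(-1/N)^{m_e}$, and taking expectations, $\E\prod_{e\in A}X_e\prod_{e\in S\setminus A}(1-X_e)\le \E\prod_{e\in A}X_e = \Pr[A\subseteq\calG_{[N]}]$, which suffices since all the surviving scalar factors can be bounded in absolute value as above.

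I do not expect any serious obstacle here — this really is the trivial bound, and the only thing to get right is the bookkeeping: ensuring that $\Pr[A\subseteq\calG_{[N]}]\le O(1/N)^{|A|}$ (true because conditioning on $|A|-1$ edges of a matching being present, the last specified edge is present with probability at most $1/(N-2|A|+1)\le O(1/N)$, and this telescopes), and that the number of subsets $2^{|S|}$ and the constant $(1-1/N)^{m_e}$ factors are harmless. If the paper later needs the constant in $O(1/N)^{|S|}$ to be absolute (independent of the $m_e$), this follows because only the exponents $m_e$ — not the constants — depend on $m_e$, and $m_e\ge1$ is all that is used; the bound $\max_e m_e\le 4$ in \Cref{ass:mixed-moment-bound} is not even needed for this lemma. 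The real work of extracting the "extra wins from singleton edges" is deferred to the subsequent lemmas, as the surrounding text indicates.
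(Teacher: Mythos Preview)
Your proposal is correct and essentially identical to the paper's proof: both expand each factor $\bigl(\bone_{e\in\calG}-\tfrac1N\bigr)^{m_e}$ into a ``present'' piece (bounded by $1$) and an ``absent'' piece (bounded by $1/N$), sum over the $2^{|S|}$ subsets $R\subseteq S$ of present edges, and use $\Pr[R\subseteq\calG_{[N]}]=\frac{(N-2|R|-1)!!}{(N-1)!!}\le O(1/N)^{|R|}$ to get $O(1/N)^{|S|}$ per term. The paper just writes this in two lines without the running commentary, but the argument is the same.
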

\begin{proof}
	We may simply expand out the product to write
	\begin{align*}
		\left| \E \prod_{e \in S} \left( \bone_{e \in \calG_{[N]}} - \frac{1}{N} \right)^{m_e} \right| &\le \sum_{R \subseteq S} \left( \frac{1}{N} \right)^{|S \setminus R|} \cdot \E \bone_{R \subseteq \calG_{[N]}} \\
			&\le \sum_{R \subseteq S} \left( \frac{1}{N} \right)^{|S \setminus R|} \cdot \frac{(N-2|R|-1)!!}{(N-1)!!} = O\left( \frac{1}{N} \right)^{|S|} \mper
	\end{align*}
\end{proof}

Now, on the other end of the spectrum, let us demonstrate the wins that appear when every edge is a singleton in the configuration model. To do so, we shall use the Laplace method.

\begin{lemma}[{Laplace's Method, \cite[Chapter 2]{But07}}]
	\label{lem:laplace-method}
	Let $f,g : \R \to \R$ be such that $g$ has a unique global maximizer $x^\star$ with $g$ analytic in a neighborhood of $x^\star$, with $f(x^\star) \ne 0$ and $g''(x^\star) \ne 0$. Then,
	\[ \int_{-\infty}^{\infty} f(x) e^{Ng(x)} \dif x = f(x^\star) e^{Ng(x^\star)} \sqrt{\frac{2\pi}{Ng''(x^\star)}} \left( 1 + O\left(\frac{1}{N}\right) \right)\mper \]
	Here, the $O\left(\frac{1}{N}\right)$ hides factors depending on the third and fourth derivatives of $f$ and $g$ at $x^\star$.
\end{lemma}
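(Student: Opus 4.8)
The plan is the classical localize-and-rescale (``Laplace'') argument, organized so that the error is tracked to order $1/N$ rather than merely $o(1)$. Write $I_N \coloneqq \int_{-\infty}^{\infty} f(x)\, e^{Ng(x)}\,\dif x$ and pull out the value at the peak: $I_N = e^{Ng(x^\star)} \int_{-\infty}^{\infty} f(x)\, e^{N(g(x)-g(x^\star))}\,\dif x$. Since $x^\star$ is an interior global maximizer of a function that is analytic near $x^\star$, we have $g'(x^\star)=0$ and $g''(x^\star)<0$; write $a\coloneqq -g''(x^\star)=|g''(x^\star)|>0$ (so the $\sqrt{2\pi/(Ng''(x^\star))}$ in the statement is read with $|g''(x^\star)|$ in the denominator, consistent with Butler's sign convention). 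Fix the scale $\delta_N \coloneqq N^{-2/5}$ and split the integral into the \emph{bulk} $|x-x^\star|\le \delta_N$ and the \emph{tail} $|x-x^\star|>\delta_N$.

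For the tail, I would invoke the (mild) growth/integrability hypotheses accompanying the cited statement: that $g$ is bounded away from $g(x^\star)$ outside any fixed neighbourhood of $x^\star$, and that $f\,e^{g}$ is absolutely integrable. Analyticity near $x^\star$ gives $g(x)-g(x^\star)\le -\tfrac{a}{4}(x-x^\star)^2$ for $|x-x^\star|$ small, so $e^{N(g(x)-g(x^\star))}\le e^{-\tfrac{a}{4}N\delta_N^2}=e^{-\tfrac{a}{4}N^{1/5}}$ on the near part of the tail, while on the far part $e^{N(g-g(x^\star))}\le e^{-cN}$ against the integrable factor $|f|\,e^{g}$. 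Either way the tail contributes $O\!\big(e^{-\Omega(N^{1/5})}\big)$, negligible next to the claimed main term of order $N^{-1/2}$.

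For the bulk, substitute $x = x^\star + u/\sqrt N$, turning it into an integral over the growing window $|u|\le \sqrt N\,\delta_N = N^{1/10}$. Using $g'(x^\star)=0$ and Taylor's theorem,
\[ N\big(g(x^\star + u/\sqrt N)-g(x^\star)\big) = -\tfrac{a}{2}u^2 + \tfrac{g'''(x^\star)}{6\sqrt N}u^3 + \tfrac{g^{(4)}(x^\star)}{24N}u^4 + O\!\Big(\tfrac{u^5}{N^{3/2}}\Big)\mcom \]
and $f(x^\star + u/\sqrt N) = f(x^\star) + \tfrac{f'(x^\star)}{\sqrt N}u + \tfrac{f''(x^\star)}{2N}u^2 + O\!\big(u^3 N^{-3/2}\big)$; over $|u|\le N^{1/10}$ all remainders are uniformly $O(N^{-3/2})$ after integration against the Gaussian weight $e^{-au^2/2}$, since the relevant derivatives of $g$ are bounded near $x^\star$. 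Exponentiating, $\exp\!\big(\tfrac{g'''(x^\star)}{6\sqrt N}u^3 + \tfrac{g^{(4)}(x^\star)}{24N}u^4 + \cdots\big) = 1 + \tfrac{g'''(x^\star)}{6\sqrt N}u^3 + \tfrac1N\big(\tfrac{g^{(4)}(x^\star)}{24}u^4 + \tfrac{g'''(x^\star)^2}{72}u^6\big) + (\text{terms of order }N^{-3/2}\text{ or smaller over the window})$. Multiplying the $f$-expansion against this and grouping by powers of $N^{-1/2}$: the coefficient of $N^{-1/2}$ is $\tfrac{f(x^\star)g'''(x^\star)}{6}u^3 + f'(x^\star)u$, an \emph{odd} polynomial in $u$, hence integrates to $0$ against the even weight $e^{-au^2/2}$; the coefficient of $N^{0}$ is $f(x^\star)$; and the coefficient of $N^{-1}$ is an explicit even polynomial in $u$ with coefficients built from $f(x^\star),f'(x^\star),f''(x^\star),g'''(x^\star),g^{(4)}(x^\star)$, which integrates to a finite constant. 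Therefore the bulk integral equals
\[ \frac{1}{\sqrt N}\left(f(x^\star)\int_{\R} e^{-au^2/2}\,\dif u + O\!\Big(\tfrac1N\Big)\right) = \frac{f(x^\star)}{\sqrt N}\,\sqrt{\frac{2\pi}{a}}\,\Big(1 + O\!\big(\tfrac1N\big)\Big)\mcom \]
where extending the $u$-integral from $|u|\le N^{1/10}$ to all of $\R$ costs only $e^{-\Omega(N^{1/5})}$ by Gaussian decay, and I have used the elementary identity $\int_{\R}e^{-au^2/2}\,\dif u = \sqrt{2\pi/a}$. Re-inserting the $e^{Ng(x^\star)}$ prefactor and combining with the tail estimate yields the claimed asymptotic.

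The main obstacle---essentially the only delicate point---is the bookkeeping of the localization: one must quantify how $g$ falls away from its maximum across the \emph{shrinking} window $|x-x^\star|\in[\delta_N,\delta_0]$ (where the unique-global-maximizer assumption and the mild coercivity/integrability side conditions of Butler's theorem are genuinely used), and one must check that every Taylor remainder stays $O(N^{-3/2})$ \emph{uniformly} over the \emph{growing} rescaled window $|u|\le N^{1/10}$. This is exactly why $\delta_N=N^{-2/5}$ is the right scale: $N\delta_N^2 = N^{1/5}\to\infty$ lets the Gaussian peak form, while $N\delta_N^3 = N^{-1/5}\to 0$ and $N\delta_N^4 = N^{-3/5}\to 0$ kill the cubic and quartic corrections. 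Once these estimates are in hand, the parity cancellation of the $N^{-1/2}$ term is precisely what upgrades the error from $o(1)$ to $O(1/N)$, and nothing beyond Gaussian moment computations remains.
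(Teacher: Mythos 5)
This lemma is quoted from \cite[Chapter 2]{But07} and the paper supplies no proof of its own, so there is nothing internal to compare against; your localize-and-rescale argument is the standard proof and is essentially the one in the cited source. Your sketch is correct: the scale $\delta_N = N^{-2/5}$, the fourth-order Taylor expansion of $g$ (and second-order expansion of $f$), the parity cancellation of the $N^{-1/2}$ term, and the exponentially small tails---together with the implicit integrability/coercivity side conditions and the sign convention $|g''(x^\star)|$, both of which you rightly flag---are exactly the ingredients that yield the stated $O(1/N)$ error.
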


\begin{lemma}
	\label{lem:dreg-laplace}
	Let $S$ be a matching with $k$ edges on vertex set $[N]$, and $\beta \ge 1$. For simplicity, assume that $k < \frac{N}{4}$. If $\calG$ is a uniformly random perfect matching on $[N]$, we have
	\[ \left| \E \prod_{e \in S} \left( \bone_{e \in \calG} - \frac{1}{\beta N} \right) \right| \le O\left( \frac{1}{N} \right)^{k} \cdot \left( \frac{\beta-1}{\beta} + \sqrt{\frac{2k}{\beta N}} \right)^{k} \mper \]
\end{lemma}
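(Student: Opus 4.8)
The plan is to turn the expectation into an exact integral via a generating-function identity, and then estimate that integral by a Laplace-type analysis kept uniform in $k$. Expanding the product over the matching $S=\{e_1,\dots,e_k\}$ and writing $c=\tfrac1{\beta N}$,
\[
\E\prod_{e\in S}\Bigl(\bone_{e\in\calG}-c\Bigr)=\sum_{R\subseteq S}(-c)^{k-|R|}\,\Pr[R\subseteq\calG]=\sum_{j=0}^{k}\binom kj(-c)^{k-j}\,\frac{(N-2j-1)!!}{(N-1)!!},
\]
since a uniformly random perfect matching contains a fixed $j$-edge matching with probability $\tfrac{(N-2j-1)!!}{(N-1)!!}$. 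I would then invoke the Gaussian moment identity $(N-2j-1)!!=\tfrac2{\sqrt{2\pi}}\int_0^\infty x^{N-2j}e^{-x^2/2}\,dx$ (valid as $N$ is even), swap the finite sum with the integral, collapse the inner binomial sum, and substitute $x=\sqrt N\,t$ to arrive at
\[
\E\prod_{e\in S}\Bigl(\bone_{e\in\calG}-c\Bigr)=\frac{c_N}{N^k}\int_0^\infty e^{N g(t)}\Bigl(\frac1{t^2}-\frac1\beta\Bigr)^k dt,\qquad g(t)=\log t-\tfrac{t^2}2,
\]
where $c_N=\tfrac{2N^{(N+1)/2}}{(N-1)!!\sqrt{2\pi}}=O(\sqrt N\,e^{N/2})$ by Stirling. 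The hypothesis $k<N/4$ ensures $N-2k>0$, so the integrand is integrable at $t=0$ despite the $t^{-2k}$ factor.

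Next I would reduce to $\beta\in[1,2]$: when $\beta\ge 2$ one has $\tfrac{\beta-1}\beta\ge\tfrac12$, so the claimed bound is $\Omega\bigl((C/N)^k\bigr)$ and Lemma~\ref{lem:trivial-dreg-bound} already delivers it. For $\beta\in[1,2]$, apply the triangle inequality $\bigl|\tfrac1{t^2}-\tfrac1\beta\bigr|\le\bigl|\tfrac1{t^2}-1\bigr|+\tfrac{\beta-1}\beta$, so that up to a factor $2^k$ it suffices to bound two pieces. The first, $\tfrac{c_N}{N^k}\bigl(\tfrac{\beta-1}\beta\bigr)^k\int_0^\infty e^{Ng(t)}dt$, equals $\tfrac1{N^k}\bigl(\tfrac{\beta-1}\beta\bigr)^k$ by a clean $\Gamma$-function evaluation of the integral, and is comfortably within the target. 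The second is $\tfrac{c_N}{N^k}\int_0^\infty e^{Ng(t)}\bigl|\tfrac1{t^2}-1\bigr|^k dt$, i.e.\ exactly the $\beta=1$ instance of the lemma.

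The heart of the argument is estimating this last quantity by splitting $(0,\infty)$ at the critical point $t^\star=1$ of $g$, where $g(1)=-\tfrac12$ and $g''(1)=-2$. On a fixed window $t\in(\tfrac12,\tfrac32)$ around $t^\star$, use the quadratic bounds $g(t)\le g(1)-\Omega\bigl((t-1)^2\bigr)$ and $\bigl|t^{-2}-1\bigr|\le O(|t-1|)$ to reduce to a Gaussian integral $\int e^{-\Omega(N)s^2}|s|^k\,ds=\Theta\bigl(N^{-(k+1)/2}\,\Gamma(\tfrac{k+1}2)\bigr)$; since $\Gamma(\tfrac{k+1}2)\le O(1)^k k^{k/2}$, this produces precisely the factor $(k/N)^{k/2}$, hence $(2k/N)^{k/2}$ up to constants. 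On the far regions $t\le\tfrac12$ and $t\ge\tfrac32$, $e^{Ng(t)}$ is exponentially smaller than $e^{Ng(1)}$ while $\bigl|t^{-2}-1\bigr|^k$ is tame ($\le t^{-2k}$ for $t\le\tfrac12$, using $k<N/4$; $\le 1$ for $t>1$), so after multiplying by $c_N/N^k$ these contributions carry a factor $e^{-\Omega(N)}$ times at most $(O(1)/N)^k$, which lies below the target $(C/N)^k(2k/N)^{k/2}$ once $C$ is a large enough absolute constant — the $N^{-k}$ already present is what makes this comparison go through even for $k$ as large as $\sim N/4$, and the finitely many small-$N$ cases are absorbed into $C$. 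Collecting the two pieces and using $\sqrt{2k/N}\le\sqrt2\,\sqrt{2k/(\beta N)}$ for $\beta\in[1,2]$ then yields the claimed bound.

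The step I expect to be the main obstacle is obtaining a Laplace estimate that is uniform in $k$. Lemma~\ref{lem:laplace-method} cannot be used as a black box here, because its error term hides the derivatives at $t^\star$ of the amplitude $f(t)=(t^{-2}-\beta^{-1})^k$, hence implicit $k$-dependence, which is unacceptable when $k$ grows with $N$. This is what forces the explicit near/far dissection above; and it is exactly the (near-)vanishing of $f$ at the critical point $t=1$ when $\beta$ is close to $1$ that is responsible for the extra $\sqrt{2k/(\beta N)}$ in the bound rather than merely $\tfrac{\beta-1}\beta$.
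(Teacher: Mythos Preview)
Your proof is correct and reaches the same integral representation as the paper (via the Gaussian moment identity and a rescaling), but the Laplace analysis is organized differently. The paper keeps \emph{all} of the $k$-dependence in the exponent: after its substitution $x=\sqrt{Nt}$ it writes the integrand as $\exp\bigl(N\bigl[-\tfrac t2+\tfrac{1-\alpha}2\log t+\tfrac\alpha2\log|1-\tfrac t\beta|\bigr]\bigr)$ with $\alpha=2k/N$, locates the $(\alpha,\beta)$-dependent critical point $t^\star$ by solving the quadratic $t^2-(1+\beta)t+\beta(1-\alpha)=0$, and then reads off the target directly from the explicit bound $1-t^\star/\beta\le\tfrac{\beta-1}\beta+\sqrt{\alpha/\beta}$; it invokes Laplace's lemma (Lemma~\ref{lem:laplace-method}) essentially as a black box, acknowledging but not spelling out the uniformity-in-$N$ issue you flag. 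Your route instead leaves the $k$-dependent factor as amplitude, reduces to $\beta\in[1,2]$ via the trivial bound of Lemma~\ref{lem:trivial-dreg-bound}, splits $|t^{-2}-\beta^{-1}|\le|t^{-2}-1|+\tfrac{\beta-1}\beta$, kills the constant piece using the clean identity $c_N\!\int e^{Ng}=1$ (which is just the $k=0$ case of your own formula), and handles the residual $\beta=1$ integral by an explicit near/far dissection around $t^\star=1$. What this buys you is that you never need a uniform-in-$k$ Laplace lemma --- the near-window Gaussian moment computation makes the emergence of $(k/N)^{k/2}$ completely transparent, and the far regions go through once the $O(1)$ constant is taken large enough (your remark that ``the $N^{-k}$ already present is what makes this comparison go through'' is exactly the right observation). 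The paper's route is shorter and avoids case-splitting on $\beta$, but at the cost of deferring the delicate uniformity issue; your route is more self-contained.
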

\begin{proof}
	We start by expanding
	\begin{align*}
		\E \prod_{e \in S} \left( \bone_{e \in \calG} - \frac{1}{\beta N} \right) &= \sum_{T \subseteq S} \E \bone_{T \subseteq \calG} \cdot \left( -\frac{1}{\beta N} \right)^{k - |T|}
	\end{align*}
	By the definition of $\calG$, we have
	\[ \Pr\left[ T \subseteq \calG \right] = \frac{(N-2|T|-1)!!}{(N-1)!!} \mper \]
	Thus, the above expectation may be written as
	\begin{align*}
		\E \prod_{e \in S} \left( \bone_{e \in \calG} - \frac{1}{N} \right) &= \sum_{T \subseteq S} \Pr\left[ T \subseteq \calG \right] \cdot \left( - \frac{1}{\beta N} \right)^{k-|T|} \\
			&= \sum_{T \subseteq S} \frac{(N-2|T|-1)!!}{(N-1)!!} \cdot \left( - \frac{1}{\beta N} \right)^{k - |T|} \\
			&= \sum_{0 \le r \le k} \binom{k}{r} \cdot \frac{(N-2|T|-1)!!}{(N-1)!!} \cdot \left( - \frac{1}{\beta N} \right)^{k - |T|} \\
			&= \frac{1}{(N-1)!!} \sum_{0 \le r \le k} \binom{k}{r} \cdot (N-2r-1)!! \cdot \left( - \frac{1}{\beta N} \right)^{k-r} \\
			&= \frac{1}{(N-1)!!} \E_{\bg \sim \calN\left(0,1\right)} \sum_{0 \le r \le k} \binom{k}{r} \cdot \bg^{N - 2r} \cdot \left( - \frac{1}{\beta N} \right)^{k-r} \\
			&= \frac{1}{(N-1)!!} \E_{\bg \sim \calN\left(0,1\right)} \bg^{N-2k} \left( 1 - \frac{\bg^2}{\beta N} \right)^{k} \\
			&= \frac{1}{(N-1)!!} \int_{-\infty}^{\infty} \frac{1}{\sqrt{2\pi}} e^{-x^2/2} x^{N-2k}\left(1 - \frac{x^2}{\beta N}\right)^{k} \dif x \\
			&= \frac{N^{\frac{N+1}{2}-k}}{2\sqrt{2\pi}(N-1)!!} \int_0^\infty e^{-Nt/2} t^{\frac{N-1}{2}-k} \left( 1 - \frac{t}{\beta} \right)^{k} \dif t\mcom	\numberthis \label{eq:exp-to-integral}
	\end{align*}
	where in the final step we substituted $x = \sqrt{Nt}$.
	To deal with the integral expression \eqref{eq:exp-to-integral}, we will use Laplace's method \Cref{lem:laplace-method}.
	Set $\alpha = \frac{2k}{N} \in [0,\frac{1}{2}]$, and consider the integral
	\[
		\int_{0}^{\infty} e^{N\left( - \frac{t}{2} + \left( \frac{1-\alpha}{2} \right) \log t + \frac{\alpha}{2} \log \left| 1 - \frac{t}{\beta} \right| \right)} \mcom	\numberthis \label{eq:proxy-integral}
	\]
	obtained by ignoring a factor of $t$ in the integral of interest.
	It turns out that $\eqref{eq:exp-to-integral}\le O(1)\cdot \eqref{eq:proxy-integral}$, and so it suffices to bound \eqref{eq:proxy-integral}.
	Indeed, this will follow from the fact that the maximizer $t^{\star}$ of the function of $t$ in the exponent is achieved at a value that is $\Omega(1)$ by using an appropriate version of \Cref{lem:laplace-method} where we bring some factors of $t^\star$ out as another function $f$, and allowing $g$ to depend (very slightly) on $N$---we omit the details, and refer the curious reader to the usual proof of the legitimacy of Laplace's method from, e.g., \cite{But07}.

	Let us find the maximizers of the exponent of this function, given by $g(t) = - \frac{t}{2} + \left( \frac{1-\alpha}{2} \right) \log t + \frac{\alpha}{2} \log \left| 1-\frac{t}{\beta} \right|$. Setting the derivative to $0$ for $t < \beta$ gives
	\begin{align*}
		- \frac{1}{2} + \left(\frac{1-\alpha}{2} \right) \cdot \frac{1}{t} - \frac{\alpha}{2} \cdot \frac{1}{\beta-t} &= 0 \\
		(1-\alpha) \cdot \frac{1}{t} - \alpha \cdot \frac{1}{\beta-t} &= 1 \\
		(1-\alpha) \cdot (\beta-t) - \alpha t &= \beta t-t^2 \\
		t^2 - (1+\beta)t + \beta(1-\alpha) &= 0 \mper
	\end{align*}
	This yields the local maximizer $t^\star = \frac{\beta+1}{2} - \sqrt{\frac{(\beta+1)^2}{4} - \beta(1-\alpha)}$. Observe that because $\alpha < \frac{1}{2}$, $t^\star = \Omega(1)$. Setting the derivative to $0$ for $t > \beta$ gives
	\begin{align*}
		- \frac{1}{2} + \left(\frac{1-\alpha}{2} \right) \cdot \frac{1}{t} + \frac{\alpha}{2} \cdot \frac{1}{\beta-t} &= 0 \\
		(1-\alpha) \cdot \frac{1}{t} + \alpha \cdot \frac{1}{\beta-t} &= 1 \\
		(1-\alpha) \cdot (\beta-t) + \alpha t &= \beta t-t^2 \\
		t^2 + (\beta + 1 - 2\alpha)t + \beta(1-\alpha) &= 0 \mper
	\end{align*}
	It is not difficult to see that this has no positive roots. Therefore, the original function is decreasing on $(\beta,\infty)$, and $t^{\star}$ is in fact the global maximizer. Therefore, Laplace's method \Cref{lem:laplace-method} yields that
	\[ \int_{0}^{\infty} e^{Ng(t)} \dif t = e^{Ng(t^\star)} \cdot \sqrt{\frac{2\pi}{-Ng''(t^\star)}} \left( 1 + O\left( \frac{1}{N} \right) \right) \mper \]
	Indeed, the third and fourth derivatives of $g$ at $t^\star$ are bounded due to the upper bound on $\alpha$. As a result,
	\begin{align*}
		\E \prod_{e \in S} \left( \bone_{e \in \calG} - \frac{1}{\beta N} \right) &\lesssim \frac{N^{\frac{N}{2} - k}}{(N-1)!!} \cdot \frac{e^{Ng(t^\star)}}{\sqrt{-g''(t^\star)}} \\
			&= \frac{N^{\frac{N}{2} - k}}{(N-1)!!} \cdot \frac{e^{-Nt^\star/2}}{\sqrt{-g''(t^\star)}} \cdot \left(t^\star\right)^{N(1-\alpha)/2} \cdot \left( 1 - \frac{t^\star}{\beta} \right)^{N\alpha/2} \mper
	\end{align*}
	We have
	\begin{align*}
		1 - \frac{t^\star}{\beta} &= \frac{1}{\beta} \left( \frac{\beta-1}{2} + \sqrt{\frac{(\beta-1)^2}{4} + \alpha\beta} \right) \\
			&\le \frac{1}{\beta} \left( \beta-1 + \sqrt{\alpha\beta} \right) = \frac{\beta-1}{\beta} + \sqrt{ \frac{\alpha}{\beta} } \mper
	\end{align*}
	Using $\Cref{eq:exp-to-integral}\le O(1)\cdot\Cref{eq:proxy-integral}$
	and plugging this into the above gives
	\[ \E \prod_{e \in S} \left( \bone_{e \in \calG} - \frac{1}{\beta N} \right) \lesssim \frac{N^{\frac{N}{2} - k}}{(N-1)!!} \cdot \frac{e^{-Nt^\star/2}}{\sqrt{-g''(t^\star)}} \cdot \left(t^\star\right)^{N(1-\alpha)/2} \cdot \left( \frac{\beta-1}{\beta} + \sqrt{\frac{2k}{\beta N}} \right)^{k} \mper \]
	Stirling's approximation allows us to write
	\[ \E \prod_{e \in S} \left( \bone_{e \in \calG} - \frac{1}{\beta N} \right) \lesssim \frac{(N-2k-1)!!}{(N-1)!!} \cdot {e^{\frac{N(1-\alpha)}{2}}} \cdot \frac{e^{-Nt^\star/2}}{\sqrt{-g''(t^\star)}} \cdot \left(\frac{t^\star}{1-\alpha}\right)^{N(1-\alpha)/2} \cdot \left( \frac{\beta-1}{\beta} + \sqrt{\frac{2k}{\beta N}} \right)^{k} \mper \]
	We now observe that
	\begin{align*}
		-t^\star + (1-\alpha) \left( 1 + \ln \frac{t^\star}{1-\alpha} \right) \le 0 \mper
	\end{align*}
	Indeed, $1 + \ln \frac{t^\star}{1-\alpha} \le \frac{t^\star}{1-\alpha}$. As a result,
	\[ \E \prod_{e \in S} \left( \bone_{e \in \calG} - \frac{1}{\beta N} \right) \lesssim \frac{(N-2k-1)!!}{(N-1)!!} \cdot \frac{1}{\sqrt{-g''(t^\star)}} \cdot \left( \frac{\beta-1}{\beta} + \sqrt{\frac{2k}{\beta N}} \right)^{k} \mper \]
	Finally, let us deal with the second derivative term. As observed already, we have $\beta-t^\star \le \beta-1 + \sqrt{\alpha\beta}$. We have $t^\star \le \frac{\beta+1}{2} \cdot \frac{4\beta(1-\alpha)}{(\beta+1)^2} \lesssim 1-\alpha$. Consequently,
	\[ -2g''(t^\star) = \frac{1-\alpha}{(t^\star)^2} + \frac{\alpha}{(\beta - t^\star)^2} \ge \frac{1-\alpha}{(t^\star)^2} \gtrsim 1 \mper \]
	To conclude, we use Stirling's approximation to bound
	\begin{align*}
		\frac{(N-2k-1)!!}{(N-1)!!} &\lesssim O(1)^k \cdot \frac{(N-2k)^{(N-2k)/2}}{N^{N/2}} \\
			&\le O \left( \frac{1}{N} \right)^k \mper \qedhere
	\end{align*}
\end{proof}

Let us next describe a bound that puts together the above two, and demonstrates the singleton wins when there are other edges that appear with multiplicity greater than $1$.

\begin{lemma}
	\label{lem:remove-boring-edges}
	Let $S$ be an arbitrary subgraph on vertex set $[N]$, and $T \subseteq S$ such that $T$ and $S \setminus T$ are vertex-disjoint, and $T$ forms a matching. Further suppose that $|S| \le \frac{N}{K}$ for some $K \ge 2$. Let $m_e \ge 1$ be some constant independent of $N$ for each $e \in S$, with $m_e = 1$ for all $e \in T$. Then, for $\calG_{[N]}$ a uniformly random perfect matching on $[N]$, we have
	\[ \left| \E \prod_{e \in S} \left( \bone_{e \in \calG_{[N]}} - \frac{1}{N} \right)^{m_e} \right| \le O\left(\frac{1}{N}\right)^{|S|} \cdot \left( \frac{1}{K} + \sqrt{ \frac{|T|}{N} } \right)^{|T|} \mper \]
\end{lemma}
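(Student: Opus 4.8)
The plan is to peel off the high-multiplicity edges $S \setminus T$ by a crude inclusion--exclusion (in the spirit of \Cref{lem:trivial-dreg-bound}), and then apply the sharp estimate \Cref{lem:dreg-laplace} to the residual expectation over the singleton matching $T$, on the ground set that remains after conditioning. Before doing so I would dispose of bounded $K$: if $K \le K_0$ for a suitable absolute constant $K_0$, then since $|T| \le |S|$ and $\frac1K + \sqrt{|T|/N} \ge \frac1{K_0}$ we have $\left(\frac1K + \sqrt{|T|/N}\right)^{|T|} \ge (1/K_0)^{|S|}$, so the claimed bound already follows from the trivial bound $\left|\E \prod_{e \in S}(\bone_{e \in \calG_{[N]}} - \frac1N)^{m_e}\right| \le O(1/N)^{|S|}$ of \Cref{lem:trivial-dreg-bound} after absorbing a factor $K_0^{|S|}$ into the $O(1/N)^{|S|}$. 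Hence I may assume $K \ge K_0$, which leaves plenty of slack in all the size constraints below.

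For the reduction, write $(\bone_{e \in \calG_{[N]}} - \frac1N)^{m_e} = d_e + c_e\,\bone_{e \in \calG_{[N]}}$ using $\bone^2 = \bone$, where $d_e = (-1/N)^{m_e}$ and $c_e = (1-1/N)^{m_e} - (-1/N)^{m_e}$; elementary estimates give $|d_e| \le 1/N$ and $|c_e| \le 1$, with $d_e = -1/N$ and $c_e = 1$ whenever $m_e = 1$ (in particular for all $e \in T$). Expanding only the product over $e \in S \setminus T$ and using linearity,
\[ \E \prod_{e \in S}\Big(\bone_{e \in \calG_{[N]}} - \tfrac1N\Big)^{m_e} = \sum_{\substack{R \subseteq S \setminus T \\ R \text{ a matching}}} \alpha_R \cdot \E\Big[ \bone_{R \subseteq \calG_{[N]}} \prod_{e \in T}\big(\bone_{e \in \calG_{[N]}} - \tfrac1N\big) \Big], \]
where $|\alpha_R| \le (1/N)^{|S \setminus T| - |R|}$. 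Fixing a matching $R$ of $r$ edges and conditioning on $R \subseteq \calG_{[N]}$ leaves a uniformly random perfect matching on $[N] \setminus V(R)$, a set of size $N' := N - 2r$; and since $T$ is vertex-disjoint from $S \setminus T \supseteq R$, all edges of $T$ lie in this residual ground set. Therefore
\[ \E\Big[ \bone_{R \subseteq \calG_{[N]}} \prod_{e \in T}\big(\bone_{e \in \calG_{[N]}} - \tfrac1N\big) \Big] = \Pr\big[R \subseteq \calG_{[N]}\big] \cdot \E_{\calG' \sim \mathrm{PM}([N]\setminus V(R))} \prod_{e \in T}\big(\bone_{e \in \calG'} - \tfrac1N\big), \]
and $\Pr[R \subseteq \calG_{[N]}] = \frac{(N-2r-1)!!}{(N-1)!!} \le (2/N)^r$, since $r \le |S| \le N/K$ forces $N' \ge N/2$.

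Now I would apply \Cref{lem:dreg-laplace} to the residual expectation with ground-set size $N'$, the $|T|$-edge matching $T$, and $\beta := N/N' \ge 1$ chosen precisely so that $\frac1{\beta N'} = \frac1N$; the hypothesis $|T| < N'/4$ holds because $N' \ge N/2$ and $|T| \le |S| \le N/K \le N/K_0$. Using $\frac{\beta-1}{\beta} = \frac{2r}{N}$, $\sqrt{\frac{2|T|}{\beta N'}} = \sqrt{\frac{2|T|}{N}}$, and $N' \ge N/2$, this gives
\[ \Big| \E_{\calG'} \prod_{e \in T}\big(\bone_{e \in \calG'} - \tfrac1N\big) \Big| \le O(1/N)^{|T|} \cdot \Big( \tfrac{2r}{N} + \sqrt{\tfrac{2|T|}{N}} \Big)^{|T|} \le O(1/N)^{|T|} \cdot 2^{|T|} \Big( \tfrac1K + \sqrt{\tfrac{|T|}{N}} \Big)^{|T|}, \]
where the last step uses $\frac{2r}{N} \le \frac{2|S|}{N} \le \frac2K$. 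Multiplying the three estimates ($|\alpha_R| \le (1/N)^{|S\setminus T|-r}$, $\Pr[R\subseteq\calG_{[N]}] \le (2/N)^r$, and the bound just obtained), using the triangle inequality, and summing over $R$ grouped by $r = |R|$ (using $\sum_{r} \binom{|S\setminus T|}{r} 2^r = 3^{|S\setminus T|}$), all constants raised to powers of $|S\setminus T| \le |S|$ or $|T| \le |S|$ collapse into $O(1/N)^{|S|}$, yielding
\[ \Big| \E \prod_{e \in S}\big(\bone_{e \in \calG_{[N]}} - \tfrac1N\big)^{m_e} \Big| \le O(1/N)^{|S|} \cdot \Big( \tfrac1K + \sqrt{\tfrac{|T|}{N}} \Big)^{|T|} \mper \]

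The main difficulty is not conceptual---the real work is already packaged in \Cref{lem:dreg-laplace}---but organizational: one must set up the conditioning so that the residual expectation over the singleton matching $T$ is \emph{exactly} of the form handled by \Cref{lem:dreg-laplace}, with the correct value $\beta = N/N'$ recording the shrinking of the ground set, and then carry out the bookkeeping verifying that every stray constant raised to a power of $|S|$ or $|T|$ is absorbed into $O(1/N)^{|S|}$; the bounded-$K$ case, where \Cref{lem:dreg-laplace} does not literally apply, must be separately (and harmlessly) reduced to \Cref{lem:trivial-dreg-bound}.
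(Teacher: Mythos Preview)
Your proof is correct and follows essentially the same approach as the paper: expand out the high-multiplicity factors over $S\setminus T$, condition on each surviving matching $R$, and apply \Cref{lem:dreg-laplace} to the residual singleton matching $T$ on the reduced ground set. Your treatment is in fact slightly more explicit than the paper's---you identify the exact value $\beta = N/N'$ needed to match the centering $\tfrac1N$ after conditioning, and you separately dispose of the bounded-$K$ regime via \Cref{lem:trivial-dreg-bound}, which the paper glosses over.
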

\begin{proof}
	Let us expand out the non-$T$ terms in the product. This yields an expression of the form
	\begin{align*}
		\left| \E \prod_{e \in S} \left( \bone_{e \in \calG_{[N]}} - \frac{1}{N} \right)^{m_e} \right| &\le \sum_{R \subseteq S \setminus T} \frac{1}{N^{|S \setminus (T \cup R)|}} \left| \E \left[\prod_{e \in T} \left( \bone_{e \in \calG_{[N]}} - \frac{1}{N} \right)^{m_e} \cdot \bone_{R \subseteq \calG_{[N]}} \right] \right| \mper
	\end{align*}
	Observe now that if $R$ consists of any edges that are not vertex-disjoint, the corresponding term on the right is zero. In case all these edges are vertex-disjoint (so they form a matching), we have $\Pr\left[ R \subseteq \calG_{[N]} \right] = \frac{(N-2|R|-1)!!}{(N-1)!!} = O\left( \frac{1}{N} \right)^{|R|}$. Thus,
	\begin{align*}
		\left| \E \prod_{e \in S} \left( \bone_{e \in \calG_{[N]}} - \frac{1}{N} \right)^{m_e} \right| &\le \sum_{R \subseteq S \setminus T} \frac{1}{N^{|S \setminus (T \cup R)|}} \cdot O\left( \frac{1}{N} \right)^{|R|} \cdot \left| \E \left[\prod_{e \in T} \left( \bone_{e \in \calG_{[N]}} - \frac{1}{N} \right)^{m_e} \mid R \subseteq \calG_{[N]} \right] \right| \\
			&= O\left(\frac{1}{N}\right)^{|S \setminus T|} \cdot \sum_{R \subseteq S \setminus T} \left| \E \left[\prod_{e \in T} \left( \bone_{e \in \calG_{[N] \setminus \bigcup_{e \in R} e}} - \frac{1}{N} \right)^{m_e} \right] \right|
	\intertext{We may now apply \Cref{lem:dreg-laplace} to the summand to continue the chain of inequalities as follows.}
		&\le O\left(\frac{1}{N}\right)^{|S \setminus T|} \cdot \sum_{R \subseteq S \setminus T} O \left( \frac{1}{N \left( 1 - \frac{1}{K} \right)} \right)^{|T|} \cdot \left( \frac{1}{K} + \sqrt{ \frac{|T|}{N} } \right)^{|T|} \\
			&\le O \left( \frac{1}{N} \right)^{|S|} \cdot \left(\frac{1}{K} + \sqrt{ \frac{|T|}{N} } \right)^{|T|} \qedhere
	\end{align*}
\end{proof}

Finally, let us put the pieces together.

\begin{lemma}
	\label{lem:dreg-bound-on-subgraph-expectation}
	Let $H^1,H^2$ be a pair of disjoint undirected graphs on $n$ vertices, where the maximum degree in $H^1$ is $2$, and $m_e \ge 2$ an $n$-independent constant for each edge $e$ in $H^2$. Then, 
	\[ \left| \E \prod_{e \in H^1} \left( \bone_{e \in G} - \frac{d}{n} \right) \prod_{e \in H^2} \left( \bone_{e \in G} - \frac{d}{n} \right)^{m_e} \right| \le O\left(\frac{1}{n}\right)^{e(H^1) + e(H^2)} \cdot d^{\frac{1}{2} e(H^1) + e(H^2)} \mcom \]
	where $G$ is, as usual, a random $d$-regular graph on $n$ vertices, drawn from the configuration model.
\end{lemma}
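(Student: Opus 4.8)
We work in the configuration model, so $\bG$ is encoded by a uniformly random perfect matching $\calG$ on $[N]$ with $N=nd$, and $ij\in\bG$ iff a half-edge of the cloud $\{i\}\times[d]$ is matched to a half-edge of $\{j\}\times[d]$. Since $\frac dn=\frac{d^2}{N}$, for every edge $e=ij$ we have the exact identity
\[
\bone_{e\in\bG}-\frac dn \;=\; Z_e - W_e, \qquad Z_e:=\sum_{(a,b)\in[d]^2}\Bigl(\bone_{(i,a)(j,b)\in\calG}-\tfrac1N\Bigr),
\]
where $W_e\ge 0$, the ``multi-edge excess'', is supported on the (rare) event that $i,j$ span two or more parallel edges and is itself a sum, over unordered pairs of distinct half-edge pairs between the two clouds, of products of the two associated half-edge indicators. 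Plugging this in turns the expectation in the statement into a sum of ``lift'' terms. The plan is to prove the bound for the term in which \emph{every} factor uses its $Z_e$-part, and to note that using a $W_e$-term for any edge replaces one half-edge pair by a two-edge matching, costing an extra factor $1/N$ in the estimates below but gaining at most $d^2$ in the relevant count -- a net factor $\le d^2/N = d/n \le 1$; so all $W_e$-terms are dominated by the all-$Z$ term and absorbed into the final constant. (Throughout we assume $e(H^1)+e(H^2)=O(N)$, i.e.\ $|S|\le N/2$ below; the complementary configurations, too large to embed in $\bG$, contribute negligibly and are treated by a cruder direct argument.)

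\textbf{Reducing $H^2$.} For $e\in H^2$ and $x\in\{0,1\}$ one has $(x-\tfrac dn)^{m_e}=\alpha_e x+\beta_e$ with $\alpha_e=O(1)$ and $|\beta_e|=O((d/n)^{m_e})=O((d/n)^2)$, using $m_e\ge 2$. Expanding $\prod_{e\in H^2}(\bone_{e\in\bG}-\tfrac dn)^{m_e}$ this way, and then writing each surviving $\bone_{e\in\bG}$ as $(\bone_{e\in\bG}-\tfrac dn)+\tfrac dn$, the expectation in the statement becomes a linear combination over pairs $F'\subseteq F\subseteq H^2$ of the quantities $\E\prod_{e\in H^1\cup F'}(\bone_{e\in\bG}-\tfrac dn)$, with coefficients bounded by $O(1)^{e(H^2)}(d/n)^{2(e(H^2)-|F|)+(|F|-|F'|)}$. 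A short geometric summation over $F'$ and $F$ (using $m_e\ge 2$ once more) then shows it suffices to prove
\[
\Bigl|\E\prod_{e\in H^1\cup F'}\bigl(\bone_{e\in\bG}-\tfrac dn\bigr)\Bigr| \;\le\; O(1)^{e(H^1)+e(H^2)}\cdot\frac{d^{\,e(H^1)/2}}{n^{\,e(H^1)}}\cdot\Bigl(\frac dn\Bigr)^{|F'|}
\]
for every $F'\subseteq H^2$, which is vertex-disjoint from $H^1$ and carries all multiplicities $1$.

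\textbf{Proving the estimate.} Expanding each $Z_e$, the all-$Z$ contribution to $\prod_{e\in H^1\cup F'}Z_e$ equals $\sum_\phi\prod_e(\bone_{\phi(e)\in\calG}-\tfrac1N)$, where $\phi$ assigns a half-edge pair to each edge of $H^1\cup F'$; write $S=S_1\sqcup S_{F'}$ for the resulting half-edge subgraph, with $S_1$ the lift of $H^1$, $S_{F'}$ that of $F'$ (vertex-disjoint from $S_1$, since $H^1,F'$ are vertex-disjoint in $[n]$), all multiplicities $1$ and $|S|=e(H^1)+|F'|$. Since $H^1$ has maximum degree $\le 2$, $S_1$ is a disjoint union of half-edge paths and cycles; let $I\subseteq S_1$ be the union of its single-edge components and $p$ its number of path components. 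Then $v(S_1)=e(H^1)+p$, and $e(H^1)\ge|I|+2(p-|I|)$ because the remaining path and cycle components each contribute at least $2$ edges, so $p-\tfrac12|I|\le\tfrac12 e(H^1)$. As $I$ is a matching of multiplicity-$1$ edges vertex-disjoint from $S\setminus I$, \Cref{lem:remove-boring-edges} (with $K=N/|S|$) gives for each $\phi$
\[
\Bigl|\E\prod_{f\in S}\bigl(\bone_{f\in\calG}-\tfrac1N\bigr)\Bigr| \;\le\; O(1)^{e(H^1)}\cdot\Bigl(\tfrac1N\Bigr)^{|S|}\cdot\Bigl(\tfrac1N\Bigr)^{|I|/2}.
\]
Summing over $\phi$ factors over $\phi_{H^1}$ and $\phi_{F'}$ since $|I|$ depends only on the merge pattern of $\phi_{H^1}$. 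The $F'$-part contributes $\sum_{\phi_{F'}}(1/N)^{|F'|}\le d^{2|F'|}(1/N)^{|F'|}=(d/n)^{|F'|}$. Grouping the $H^1$-lifts by their merge pattern (there are at most $2^{e(H^1)}$ of these), each pattern fixes $v(S_1)$ and $|I|$ and is realized by at most $O(1)^{e(H^1)}d^{v(S_1)}$ choices of $\phi_{H^1}$, contributing
\[
O(1)^{e(H^1)}d^{v(S_1)}\Bigl(\tfrac1N\Bigr)^{e(H^1)}\Bigl(\tfrac1N\Bigr)^{|I|/2}
= O(1)^{e(H^1)}\cdot\frac{d^{\,p-|I|/2}}{n^{\,e(H^1)+|I|/2}}
\le O(1)^{e(H^1)}\cdot\frac{d^{\,e(H^1)/2}}{n^{\,e(H^1)}},
\]
using $v(S_1)=e(H^1)+p$, $N=nd$, and $p-\tfrac12|I|\le\tfrac12 e(H^1)$. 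Summing over the $\le 2^{e(H^1)}$ patterns and multiplying by the $F'$-factor yields the displayed estimate, and hence, after the reassembly of the previous step, the lemma.

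\textbf{Main obstacle.} The configuration-model lift and the two geometric re-summations are routine; the heart of the argument is the accounting in the last step -- that the ``extra'' half-edge consumed by each single-edge component of the $H^1$-lift (which multiplies the count by $d$) is always paid for by the cancellation bonus $(1/N)^{|I|/2}$ furnished by \Cref{lem:remove-boring-edges}, uniformly over all merge patterns, which is precisely the inequality $p-\tfrac12|I|\le\tfrac12 e(H^1)$ -- together with the bookkeeping verifying that the multi-edge $(W_e)$ contributions, and the over-large configurations, are genuinely of lower order.
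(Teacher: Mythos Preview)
Your route diverges from the paper's: rather than expanding every factor directly in the configuration model and tracking the random multiplicities $r_e$ of lifted $H^2$-edges (as the paper does in its proof of \Cref{th:ass-moment-ass-dreg-ass}, which \emph{is} its proof of this lemma), you first collapse each $H^2$-factor via the $\{0,1\}$-identity $(x-\tfrac dn)^{m_e}=\alpha_e x+\beta_e$, reducing to a product of multiplicity-one centered indicators over $H^1\cup F'$, and then exploit the path/cycle structure of the $H^1$-lift. This is an attractive alternative organization, but two steps do not go through as written.

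\textbf{The cancellation bonus from \Cref{lem:remove-boring-edges} is overstated.} You assert that the lemma (with $K=N/|S|$) yields $O(1)^{e(H^1)}(1/N)^{|S|}(1/N)^{|I|/2}$. It does not: the bonus factor is $\bigl(\tfrac1K+\sqrt{|I|/N}\bigr)^{|I|}=\bigl(\tfrac{|S|}{N}+\sqrt{|I|/N}\bigr)^{|I|}$, which even in the favorable regime $|S|\le\sqrt{|I|N}$ is only $(|I|/N)^{|I|/2}$, off from your claim by $|I|^{|I|/2}$ --- and $|I|^{|I|/2}$ is \emph{not} $O(1)^{e(H^1)}$. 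The fix is local: carry the correct factor through your algebra to obtain
\[
O(1)^{e(H^1)}\,d^{\,p-|I|/2}\,n^{-e(H^1)}\,(|I|/n)^{|I|/2},
\]
and then use both $p-\tfrac12|I|\le\tfrac12 e(H^1)$ \emph{and} $|I|\le e(H^1)\le n$ (from the degree-$2$ hypothesis) to bound $(|I|/n)^{|I|/2}\le 1$, rather than discarding the $n^{-|I|/2}$ as you do now. The regime where $|S|/N$ dominates likewise closes, but only after invoking the $O(1)^{k_1+k_2}$ slack when $|F'|>n\sqrt d$; none of this is in your write-up.

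\textbf{The vertex-disjointness of $S_1$ and $S_{F'}$ is unjustified.} You claim $S_{F'}$ is ``vertex-disjoint from $S_1$, since $H^1,F'$ are vertex-disjoint in $[n]$.'' But the lemma only takes $H^1,H^2$ to be \emph{edge}-disjoint subgraphs on the common vertex set $[n]$ (and in the paper's application they are the singleton and non-singleton edges of one underlying $H$, so they certainly share vertices). When an $H^1$-edge and an $F'$-edge at a shared vertex $v$ both lift to the same half-edge $(v,a)$, your set $I$ of single-edge components of $S_1$ fails to be vertex-disjoint from $S\setminus I$, so \Cref{lem:remove-boring-edges} cannot be applied with $T=I$, and your factorization ``$|I|$ depends only on $\phi_{H^1}$'' breaks. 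The paper's proof confronts precisely this obstruction: it isolates $H^1_{\mathrm{ab}}$, the $H^1$-edges touching vertices of degree $\ge\sqrt d$ in $H^2$, forgoes any singleton bonus on those, bounds the collision probability for the remaining $H^1_{\mathrm{nor}}$-edges in the lift, and finally absorbs the resulting $d^{\,e(H^1_{\mathrm{ab}})/2}$ loss using $e(H^1_{\mathrm{ab}})\lesssim e(H^2)/\sqrt d$. Your argument has no analogue of this step.
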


\begin{proof}[Proof of \Cref{th:ass-moment-ass-dreg-ass}]
	Let us set up some notation to make things easier. Let $H^1, H^2$ be a pair of disjoint undirected graphs on $n$ vertices, where the maximum degree of $H^1$ is $2$, and $2 \le m_e \le 4$ some constant for each $e \in H^2$. Denote $k_1 = e(H^1)$, $k_2 = e(H^2)$, and $k = k_1 + k_2$. Also let $\wt{k}_2 = \sum_{e \in H^2} m_e$. Observe that $k_1 \le 2n$.

	In the terminology of \Cref{def:nbp-simple-notation}, $H^1$ indicates the singleton part of $H$ and $H^2$ indicates the rest of $H$. Our goal is to show that
	\[ \left| \E \left[\prod_{e \in H^1} \bM_{e} \cdot \prod_{e \in H^2} \bM_{e}^{m_e}\right] \right| \le O\left( \frac{1}{n} \right)^{e(H^1) + e(H^2)} \mper \]
	Let us start by moving to the configuration model, so we may apply the bounds we have proved. For an edge $e$, let $u_e,v_e \in [n]$ be the two endpoints of $e$, ordered arbitrarily. Let $\calG$ be a draw from the configuration model resulting in $G$.
	Then,
	\begin{align}
	 	&\E \prod_{e \in H^1} \bM_e \prod_{e \in H^2} \bM_{e}^{m_e} \nonumber\\
	 		&\qquad\qquad= \frac{1}{d^{\frac{1}{2} k_1 + \frac{1}{2} \wt{k}_2}} \E \prod_{e \in H^1} \left( \sum_{(i_{e},j_{e}) \in [d]^2} \bone_{(u_e,i_e)(v_e,j_e) \in \calG} - \frac{1}{dn} \right) \prod_{e \in H^2} \left( \sum_{(i_{e},j_{e}) \in [d]^2} \bone_{(u_e,i_e)(v_e,j_e) \in \calG} - \frac{1}{dn} \right)^{m_e} \nonumber \\
	 		&\qquad\qquad= \frac{1}{d^{\frac{1}{2} k_1 + \frac{1}{2} \wt{k}_2}} \sum_{\substack{\{(i_e,j_e)\}_{e \in H^1} \\ \{(i_e^\ell,j_e^\ell)\}_{e \in H^2}^{1 \le \ell \le m_e}}} \E \prod_{e \in H^1} \left( \bone_{(u_e,i_e)(v_e,j_e) \in \calG} - \frac{1}{dn} \right) \prod_{e \in H^2} \prod_{\ell \le m_e} \left( \bone_{(u_e,i_e^\ell)(v_e,j_e^\ell) \in \calG} - \frac{1}{dn} \right) \nonumber \\
	 		&\qquad\qquad= d^{\frac{3}{2} k_1 + \frac{3}{2} \wt{k}_2} \cdot \E_{\substack{\{(i_e,j_e)\}_{e \in H^1} \sim [d]^2 \\ \{(i_e^\ell,j_e^\ell)\}_{e \in H^2}^{1 \le \ell \le m_e} \sim [d]^2}} \E \prod_{e \in H^1} \left( \bone_{(u_e,i_e)(v_e,j_e) \in \calG} - \frac{1}{dn} \right) \prod_{e \in H^2} \prod_{\ell \le m_e} \left( \bone_{(u_e,i_e^\ell)(v_e,j_e^\ell) \in \calG} - \frac{1}{dn} \right) \label{eq:dreg-messy-expression} \tag{$\dagger$}
	\end{align}

	To understand the above quantity, we split into cases based on $e(H^2)$.

	\medskip

	\noindent {\textbf{Case $e(H^2)\ge n\sqrt{d}$.}}
	First, consider the simpler situation where $e(H^2) \ge n\sqrt{d}$. In this case, a simpler argument will suffice where we only win a $\frac{d}{n}$ for each $e \in H^1 \cup H^2$, ignoring the extra wins we get from the singleton edges in $H^1$. Consider the multiplicity of each edge $e \in H^2$, that is, the number of distinct $(i_e^\ell,j_e^\ell)$. It is equal to $1 \le r_e \le m_e$ with probability $O\left( \frac{d^{2r_e}}{d^{2m_e}} \right)$. Further observe that this multiplicity is independent for each edge. Therefore, in this scenario, we may use \Cref{lem:trivial-dreg-bound} (using the aforementioned tail bound on the number of distinct edges) to bound this as
	\begin{align*}
		\eqref{eq:dreg-messy-expression}
		&\le O(1)^k \cdot d^{\frac{3}{2} e(H^1) + \frac{3}{2} \wt{k}_2} \cdot \sum_{(r_e)_{e \in H^2}} \left(\prod_{e \in H^2} \frac{d^{2r_e}}{d^{2m_e}}\right) \cdot \left( \frac{1}{dn} \right)^{e(H^1)} \left(\frac{1}{dn}\right)^{\sum r_e} \\
		&= O(1)^k \cdot \left(\frac{\sqrt{d}}{n}\right)^{e(H^1)} \cdot \frac{1}{d^{\frac{1}{2} \wt{k}_2}} \cdot \sum_{(r_e)_{e \in H^2}} \left(\frac{d}{n}\right)^{\sum r_e} \\
		&\le O(1)^k \cdot \left( \frac{\sqrt{d}}{n} \right)^{k_1} \cdot \frac{1}{d^{k_2}} \cdot \left( \frac{d}{n} \right)^{k_2} = O\left( \frac{1}{n} \right)^{k} \cdot d^{\frac{1}{2} k_1} \mper
	\end{align*}
	To conclude here, note that $d^{\frac{1}{2} k_1} \le O(1)^{n \log d} \le O(1)^{k}$, since $k_2 \ge n\sqrt{d}$.

	\medskip

	\noindent {\textbf{Case $e(H^2) \le n\sqrt{d}$.}}
	Let us thus return to \eqref{eq:dreg-messy-expression}, assuming now that $e(H^2) \le n\sqrt{d}$. We must introduce some more notation for the rest of this proof. For a specific choice of the labels $(i_e,j_e)_{e \in K}$ for some subgraph $K$ on $[n]$, let us refer to the corresponding subgraph in $[n] \times [d]$ as the ``label-extension'' of $K$. Let $H^1_{\ab}$ be the set of all edges $e$ in $H^1$ such that some endpoint of $e$ has degree at least $\sqrt{d}$ in $H^2$. Let $H^1_{\nor}$ be $H^1 \setminus H^1_{\ab}$.
	We will be interested in the value of the following quantity: what is the size of the largest subgraph of $H^1_{\nor}$ that is
	\begin{enumerate}[label=(\alph*)]
		\item a matching, and
		\item isolated from the rest of the label-extension of $H^2$ and $H^1_{\nor}$.
	\end{enumerate}
	For a given choice of labels, let this subgraph be $H^1_*$. Observe that because the endpoints of every edge $e$ in $H^1_{\nor}$ are incident on at most $O(\sqrt{d})$ edges in $H^1 \cup H^2$, the label-extension of $e$ is in $H^1_*$ with probability at least $1 - O\left(\frac{1}{\sqrt{d}}\right)$---it must simply avoid the $O(\sqrt{d})$ edges arising from (the label-extension of) $\left(H^1 \cup H^2\right) \setminus \{e\}$. In particular, $e(H^1_*) = e(H^1_{\nor}) - s$ with probability at most $O\left( \frac{1}{d} \right)^{s/2}$. As in the previous argument, let $r_e$ be the multiplicity of the edge $e \in H^2$ after label-extending. We can then use \Cref{lem:remove-boring-edges} to remove the $\sum r_e + s$ edges arising from $(H^1 \cup H^2) \setminus H^1_*$. Observe that by our assumption that $k = O(n\sqrt{d})$, the constant $K$ in \Cref{lem:remove-boring-edges} can be taken to be $\Omega(\sqrt{d})$. Consequently, recalling again that $e(H^1_{\nor}) \le e(H^1) \le 2n$,
	\begin{align*}
		\eqref{eq:dreg-messy-expression} &\le O(1)^k \cdot d^{\frac{3}{2} k_1 + \frac{3}{2} \wt{k}_2} \cdot \sum_{\substack{(r_e)_{e \in H^2} \ge 1 \\ 0 \le s \le k}} \underbrace{\left(\prod \frac{d^{2r_e}}{d^{2m_e}}\right)}_{\text{probability of $(r_e)$}} \cdot \underbrace{\left( \frac{1}{d} \right)^{s/2}}_{\text{probability of $s$}} \\
		&\qquad\qquad\qquad\cdot \underbrace{\left(\frac{1}{dn}\right)^{\sum r_e + e(H^1_{\ab}) + s} \cdot \left(\frac{1}{dn}\right)^{e(H^1_{\nor}) - s} \cdot \left( \frac{1}{\sqrt{d}} + \sqrt{\frac{e(H^1_{\nor}) - s}{dn}} \right)^{e(H^1_{\nor}) - s}}_{\text{from \Cref{lem:remove-boring-edges}}} \\
		&\le O(1)^k \cdot d^{\frac{3}{2} e(H^1) + \frac{3}{2} \wt{k}_2} \cdot \sum_{\substack{(r_e)_{e \in H^2} \ge 1 \\ 0 \le s \le k}} \left( \prod \frac{d^{2r_e}}{d^{2m_e}} \right) \cdot \left( \frac{1}{d} \right)^{s/2} \cdot \left( \frac{1}{dn} \right)^{\sum r_e + e(H^1_{\ab}) + s} \cdot \left(\frac{1}{dn}\right)^{e(H^1_{\nor}) - s} \cdot \left( \frac{1}{\sqrt{d}} \right)^{e(H^1_{\nor}) - s} \\
		&= O(1)^k \cdot \sum_{\substack{(r_e)_{e \in H^2} \ge 1 \\ 0 \le s \le k}} \left( \frac{1}{n} \right)^{k_1 + \sum r_e} \cdot d^{ \sum r_e - \frac{1}{2} \wt{k}_2 - e(H^1_{\ab}) - \frac{3}{2} e(H^1_{\nor}) + \frac{3}{2} k_1 } \\
		&= O\left( \frac{1}{n} \right)^{k} \cdot d^{ k_2 - \frac{1}{2} \wt{k}_2 + \frac{1}{2} e(H^1_{\ab}) } \le O\left( \frac{1}{n} \right)^{k} \cdot d^{ \frac{1}{2} e(H^1_{\ab}) }
	\end{align*}
	To conclude, we note that much like the simpler setting discussed earlier, $d^{ e(H^1_{\ab}) } = O(1)^{e(H^2)}$, since nearly by definition, $e(H^1_{\ab}) \le O(\sqrt{d}) \cdot e(H^2)$.
	This completes the proof, and brings us to the end of the paper. 
\end{proof}

\section*{Acknowledgments}
We would like to thank Kuikui Liu for insightful and inspiring discussions. We are also grateful to Brice Huang, Daniel Lee, and Francisco Pernice for early discussions about our techniques.

\bibliographystyle{alpha}
\bibliography{main}

\end{document}